\newtheorem{theorem}{Theorem}[section]
\newtheorem{lemma}[theorem]{Lemma}
\newtheorem{remark}[theorem]{Remark}
\newtheorem{assumption}{Assumption}
\newcommand{\ba}{\begin{array}}
\newcommand{\ea}{\end{array}}
\newcommand{\beq}{\begin{equation}}
\newcommand{\eeq}{\end{equation}}
\newcommand{\beqa}{\begin{eqnarray}}
\newcommand{\eeqa}{\end{eqnarray}}
\newcommand{\beqas}{\begin{eqnarray*}}
\newcommand{\eeqas}{\end{eqnarray*}}
\newcommand{\bi}{\begin{itemize}}
\newcommand{\ei}{\end{itemize}}
\newcommand{\gap}{\hspace*{2em}}
\newcommand{\nn}{\nonumber}
\def\eqref#1{(\ref{#1})}
\def\QED{\ifhmode\unskip\nobreak\fi\ifmmode\ifinner\else\hskip5pt\fi\fi
  \hbox{\hskip5pt\vrule width5pt height5pt depth1.5pt\hskip1pt}}
\def\aalpha{{\theta}}
\def\bd{{\bar d}}
\def\bDelta{{\bar\Delta}}
\def\bE{{\bf E}}
\def\bg{{\bar g}}
\def\bj{{\bar j}}
\def\bx{{\bar x}}
\def\cK{{\cal K}}
\def\cS{{\cal S}}
\def\dist{{\rm dist}}
\def\dom{{\rm dom}}
\def\eps{{\epsilon}}
\def\P{{\bf P}}
\def\q{{\Psi}}
\def\hg{{\hat g}}
\def\td{{\tilde d}}
\def\tF{{\tilde F}}
\def\tx{{\bar x}}
\def\Bi{{i}}
\def\td{{\tilde d}}
\title{ A Randomized Nonmonotone Block Proximal Gradient Method for a Class of Structured 
Nonlinear Programming}
\author{
	Zhaosong Lu%
	\thanks{
	Department of Mathematics, Simon Fraser University, Burnaby, BC,
	V5A 1S6, Canada. (email: {\tt zhaosong@sfu.ca}). This author was supported
        in part by NSERC Discovery Grant.}
	\and
	Lin Xiao
	\thanks{Machine Learning Groups, Microsoft Research, One Microsoft Way, Redmond, WA 98052,   
         USA. (email: {\tt lin.xiao@microsoft.com}).}
}
\begin{document}

\maketitle

\begin{abstract}

We propose a randomized nonmonotone block proximal gradient (RNBPG) method
for minimizing the sum of a smooth (possibly nonconvex) function and a block-separable 
(possibly nonconvex nonsmooth)  function. At each iteration, this method randomly picks a block 
according to any prescribed probability distribution and solves typically several associated proximal 
subproblems that usually have a closed-form solution, until a certain progress on objective 
value is achieved. In contrast to the usual randomized block coordinate descent method \cite{RichtarikTakac12,PaNe13}, our method 
has a nonmonotone flavor and uses variable stepsizes that can partially utilize the local 
curvature information of the smooth component of objective function.  
We show that 
any accumulation point of the solution sequence of the method is a stationary point of the problem 
{\it almost surely} and the method is capable of finding an approximate stationary point with high 
probability. We also establish a sublinear rate of convergence for the method in terms of 
the minimal expected squared norm of certain proximal gradients over the iterations.  When the 
 problem under consideration is convex, we show that the expected objective values generated 
by RNBPG converge to the optimal value of the problem. Under some assumptions, 
we further establish a sublinear and linear rate of convergence on the expected objective 
values generated by a monotone version of RNBPG. Finally, we conduct some preliminary experiments 
to test the performance of RNBPG on the $\ell_1$-regularized least-squares problem and 
a dual SVM problem in machine learning. The computational results demonstrate that our 
method substantially outperforms the randomized block coordinate {\it descent} method with 
fixed or variable stepsizes.

\bigskip

\noindent {\bf Key words:} 
nonconvex composite optimization, 
randomized algorithms, 
block coordinate gradient method,  
nonmonotone line search.

\bigskip
\bigskip


\end{abstract}

\section{Introduction} \label{intro}

Nowadays first-order (namely, gradient-type) methods are the prevalent tools for solving large-scale
problems arising in science and engineering. As the size of problems becomes huge, it is, however,
greatly challenging to these methods because gradient evaluation can be prohibitively expensive. 
Due to this reason, block coordinate descent (BCD) methods and their variants have been studied for 
solving various large-scale problems (see, for example,  \cite{ChHsLi08,HsChLiKeSu08,WuLa08,LiOs09,TsYu09-1,TsYu09-2,WeGoSc,Wright10,QiScGo10,YuTo11,
ShTe09,LeLe10,RiTa11,ShZh12,RichtarikTakac12-1,TaRiGo13}). Recently, Nesterov 
\cite{Nesterov12rcdm} proposed a randomized BCD (RBCD) method, which
is promising for solving a class of huge-scale convex optimization problems, provided the involved 
partial gradients can be efficiently updated. The iteration complexity for finding an approximate 
optimal solution is analyzed in \cite{Nesterov12rcdm}. More recently, Richt\'{a}rik and Tak\'{a}\v{c}
\cite{RichtarikTakac12} extended Nesterov's RBCD method \cite{Nesterov12rcdm} to solve a more 
general class of convex optimization problems in the form of
\beq \label{NLP}
\min\limits_{x \in \Re^N} \left\{F(x) := f(x) + \q(x)\right\},
\eeq
where $f$ is convex differentiable in $\Re^N$ and $\Psi$ is a block separable convex function.
More specifically,
\[
\Psi(x) = \sum_{i=1}^n \Psi_i(x_i) ,
\]
where each $x_i$ denotes a subvector of~$x$ with cardinality~$N_i$, $\{x_i :i=1,\ldots,n\}$ form a
partition of the components of~$x$, and each~$\Psi_i: \Re^{N_i} \to \Re\cup\{+\infty\}$ is a closed
convex function.

Given a current iterate $x^k$, the RBCD method \cite{RichtarikTakac12} picks
$i \in\{1,\ldots,n \}$ uniformly, solves a block-wise proximal subproblem
in the form of
\beq \label{convex-dir}
d_i(x^k) := \arg\min_{s\in\Re^{N_i}} \left\{ \nabla_i f(x^k)^Ts
+ \frac{L_i}{2}\|s\|^2 + \Psi_i(x^k_i + s) \right\},
\eeq
and sets $x^{k+1}_i = x^k_i+d_i(x^k)$ and $x^{k+1}_j = x^{k}_j$ for all $j \neq i$, where
$\nabla_i f\in\Re^{N_i}$ is the \emph{partial gradient} of~$f$ with respect to~$x_i$ and $L_i>0$
is the Lipschitz constant of $\nabla_i f$ with respect to the norm $\|\cdot\|$ (see Assumption 
\ref{assump-2} for details). The iteration complexity of finding an approximate optimal solution
 with high probability is established in \cite{RichtarikTakac12} and has recently been improved by 
Lu and Xiao \cite{LuXiao13-1}.  Very recently, Patrascu and Necoara \cite{PaNe13} extended 
this method to solve problem \eqref{NLP} in which $F$ is nonconvex, and  they studied 
convergence of the method under the assumption that the block is chosen uniformly at 
each iteration.


One can observe that for $n=1$, the RBCD method \cite{RichtarikTakac12,PaNe13} becomes 
a classical proximal (full) gradient method with a constant stepsize $1/L$. It is known that  
the latter method  tends to be practically much slower than the same type of methods but 
with variable stepsizes, for example, spectral-type stepsize \cite{BaBo88,BiMaRa00,DaZh01,WrNoFi09,LuZh12})  that utilizes partial local curvature 
information of the smooth component $f$. The variable stepsize strategy  shall also be  
applicable to the RBCD method and improve its practical performance dramatically. In addition, 
the RBCD method is a monotone method, that is, the objective values generated by the method 
are monotonically decreasing. As mentioned in the literature (see, for example, \cite{FeLuRo96,GrLu86,
ZhHa04}), nonmonotone methods often produce solutions of better quality than the monotone 
counterparts for nonconvex optimization problems. These motivate us to propose a randomized 
nonmonotone block proximal gradient method with variable stepsizes for solving a class of
 (possibly nonconvex) structured nonlinear programming problems in the form of \eqref{NLP} 
satisfying Assumption \ref{assump-2} below.

Throughout this paper we assume that the set of optimal solutions of problem \eqref{NLP}, denoted 
by $X^*$, is nonempty and the optimal value of \eqref{NLP} is denoted by $F^\star$. For simplicity 
of presentation, we associate $\Re^N$ with the standard Euclidean norm, denoted by $\|\cdot\|$. 
We also make the following assumption.

\begin{assumption} \label{assump-2}
$f$ is differentiable (but possibly nonconvex) in $\Re^N$.  Each~$\Psi_i$ is a 
(possibly nonconvex nonsmooth) function from $\Re^{N_i}$ to $\Re\cup\{+\infty\}$ for $i=1,\ldots,n$.
The gradient of function $f$ is coordinate-wise Lipschitz continuous
with constants $L_i>0$ in $\Re^N$, that is,
\[
\|\nabla_i f(x+h) - \nabla_i f(x)\| \ \le \ L_i \|h\| \quad\quad \forall h \in \cS_i, \
i=1,\ldots,n, \quad \forall x \in \Re^N,
\]
where 
\[
\cS_i = \left\{(h_1,\ldots,h_n) \in \Re^{N_1} \times \cdots \times \Re^{N_n} : h_j =0 \quad \forall j \neq i\right\}.
\]
\end{assumption}

In this paper we propose a randomized nonmonotone block proximal gradient (RNBPG) method
for solving problem \eqref{NLP} that satisfies the above assumptions. At each iteration, this method 
randomly picks a block according to an arbitrary prescribed (not necessarily uniform) probability distribution 
and solves typically several associated proximal subproblems in the form of \eqref{convex-dir} with 
$L_i$ replaced by some $\theta$, which can be, for example, estimated by the spectral method (e.g., 
see \cite{BaBo88,BiMaRa00,DaZh01,WrNoFi09,LuZh12}),  until a certain progress on the objective value 
is achieved. In contrast to the usual RBCD method \cite{RichtarikTakac12,PaNe13}, our method 
enjoys a nonmonotone flavor and uses variable stepsizes that can partially utilize the local 
curvature information of the smooth component $f$. For arbitrary probability 
distribution\footnote{The convergence analysis  of the RBCD method conducted in 
\cite{RichtarikTakac12,PaNe13} is only for uniform probability distribution.}, We show that the 
expected objective values generated by the method converge to the expected limit of the 
objective values obtained by a random single run of the method. Moreover, any accumulation 
point of the solution sequence of the method is a stationary point of the problem 
{\it almost surely} and the method is capable of finding an approximate stationary point with high 
probability. We also establish a sublinear rate of convergence for the method in terms of 
the minimal expected squared norm of certain proximal gradients over the iterations.  When the 
 problem under consideration is convex, we show that the expected objective values generated 
by RNBPG converge to the optimal value of the problem. Under some assumptions, 
we further establish a sublinear and linear rate of convergence on the expected objective 
values generated by a monotone version of RNBPG.
Finally, we conduct some preliminary experiments to test the performance of RNBPG on the 
$\ell_1$-regularized least-squares problem and a dual SVM problem in machine learning. 
The computational results demonstrate that our method substantially outperforms the 
randomized block coordinate {\it descent} method with fixed or variable stepsizes.

This paper is organized as follows. In Section \ref{method} we propose a RNBPG method for 
solving structured nonlinear programming problem \eqref{NLP} and analyze its convergence. In 
Section \ref{monotone} we analyze the convergence of RNBPG for solving structured convex 
problem. In Section \ref{comp} we conduct numerical experiments to compare RNBPG method 
with the RBCD method with fixed or variable stepsizes.

Before ending this section we introduce some notations that are used throughout this paper and 
also state some known facts. The domain of the function $F$ is denoted by $\dom(F)$. $t^+$ 
stands for $\max\{0,t\}$ for any real number $t$. Given a closed set $S$ and a point $x$, 
$\dist(x,S)$ denotes the distance between $x$ and $S$. For symmetric matrices $X$ and $Y$, 
$X \preceq Y$ means that $Y-X$ is positive semidefinite. Given a positive definite matrix $\Theta$ 
and a vector $x$, $\|x\|_{\Theta}=\sqrt{x^T\Theta x}$.  In addition, $\|\cdot\|$ denotes the 
Euclidean norm. Finally, it immediately follows from  Assumption \ref{assump-2} that
\beq \label{lipineq}
 f(x+h) \le f(x) + \nabla f(x)^T h + \frac{L_i}{2}\|h\|^2\quad\quad \ \forall h \in \cS_i, \
i=1,\ldots,n;\ \forall x \in \Re^N.
\eeq
By Lemma 2 of Nesterov \cite{Nesterov12rcdm} and Assumption \ref{assump-2}, we also know that 
$\nabla f$ is Lipschitz continuous with constant $L_f := \sum_i L_i$, that is, 
\beq \label{g-lip}
\|\nabla f(x) -\nabla f(y)\| \le L_f\|x-y\| \quad\quad x, y \in \Re^N.
\eeq

\section{Randomized nonmonotone block proximal gradient method}
\label{method}

In this section we propose a RNBPG method for solving structured nonlinear programming 
problem \eqref{NLP} and analyze its convergence.

We start by presenting a RNBPG method as follows. At each iteration, this method 
randomly picks a block according to any prescribed (not necessarily uniform) probability distribution 
and solves typically several associated proximal subproblems in the form of \eqref{convex-dir} with 
$L_i$ replaced by some $\theta_k$ until a certain progress on objective value is achieved.

\gap

\noindent
{\bf Randomized nonmonotone block proximal  gradient (RNBPG) method} \\ [5pt]
Choose $x^0 \in \dom(F)$,  $\eta >1$, $\sigma > 0$, 
$0 < \underline\aalpha \le \bar\aalpha$,  integer
$M \ge 0$, and $0<p_i<1$ for $i=1,\ldots,n$ such that $\sum^n_{i=1} p_i=1$. Set $k=0$.
\begin{itemize}
\item[1)] Set $d^k = 0$.   Pick $i_k=i \in \{1,\ldots,n\}$ with probability $p_i$. 
Choose $\aalpha_k^0 \in [\underline\aalpha, \bar\aalpha]$.
\item[2)] For $j=0,1, \ldots$
  \bi
    \item[2a)] Let $\aalpha_k = \aalpha_k^0 \eta^j$. Compute 
\begin{equation}\label{eqn:prox-solution}
(d^k)_{i_k} = \arg\min_s \left\{\nabla_{i_k} f(x^k)^T s + \frac{\theta_k}{2}
\|s\|^2 + \q_{i_k}(x^k_{i_k}+s)\right\}.
\end{equation}
    \item[2b)] If $d^k$ satisfies
          \beq \label{reduct}
           F(x^k + d^k) \ \le \ \max_{[k-M]^+ \le i \le k} F(x^i) - \frac{\sigma}{2}  \|d^k\|^2,
          \eeq
          go to step 3).
  \ei
\item[3)] Set $x^{k+1} = x^k + d^k$, $k \leftarrow k+1$ and go to step 1).
\end{itemize}
\noindent
{\bf end}

\begin{remark}
The above method becomes a monotone method if $M=0$.
\end{remark}

\gap

Before studying convergence of RNBPG, we introduce some notations and state some facts 
that will be used subsequently.

Let $\bd^{k,i}$ denote the vector $d^k$ obtained in Step (2) of RNBPG if $i_k$ is 
chosen to be $i$. Define 
\beq \label{bdx}
\bd^k = \sum^n_{i=1} \bd^{k,i}, \quad\quad \tx^k = x^k+\bd^k.  
\eeq
One can observe that  $(\bd^{k,i})_t =0$ for $t\neq i$ 
and there exist $\theta^0_{k,i} \in [\underline\aalpha, \bar\aalpha]$ and 
 the smallest nonnegative integer $j$ such that $\theta_{k,i}=\theta^0_{k,i} \eta^j$ and
\beq
F(x^k + \bd^{k,i})  \le   F(x^{\ell(k)}) - \frac{\sigma}{2}  \|\bd^{k,i}\|^2, \label{F-reduct}
\eeq
where 
\beqa
& & (\bd^{k,i})_i =\arg\min\limits_s \left\{\nabla_{i} f(x^k)^T s + \frac{\theta_{k,i}}{2}
\|s\|^2 + \q_{i}(x^k_{i}+s)\right\}, \label{subprob-soln} \\ [4pt]
&& \ell(k) = \arg\max\limits_i \{F(x^i): i=[k-M]^+, \ldots, k\}\quad\quad \forall k \ge 0. \label{lk}
\eeqa
Let $\Theta_k$ denote the block diagonal matrix $(\theta_{k,1}I_1, \ldots,\theta_{k,n}I_n)$, where 
$I_i$ is the $N_i \times N_i$ identity matrix. By the definition of $\bd^k$ and \eqref{subprob-soln}, 
we observe that
\beq \label{bdk}
\bd^k = \arg\min_d \left\{\nabla f(x^k)^T d + \frac{1}{2}d^T \Theta_k d + \q(x^k+d)\right\}.
\eeq

After $k$ iterations, RNBPG generates a random output $(x^k,F(x^k))$, which depends 
on the observed realization of random vector
\[
\xi_k = \{i_0,\ldots,i_k\}.
\]
We define $\bE_{\xi_{-1}} [F(x^0)] = F(x^0)$. Also, define
\beqa
& & \Omega(x^0) = \{x\in\Re^N: F(x) \le F(x^0)\} \label{omega}, \\ [4pt]
& & L_{\max} = \max\limits_i L_i,\quad\quad p_{\min} = \min\limits_i p_i, \label{Lf-pmin} \\ [4pt]
& & c = \max\left\{\bar\aalpha,\eta(L_{\max}+\sigma)\right\}. \label{c}
\eeqa

\gap

The following lemma establishes some relations between the expectations of $\|d^k\|$ and 
$\|\bd^k\|$.

\begin{lemma} \label{exp-dk}
Let $d^k$ be generated by RNBPG and $\bd^k$ defined in \eqref{bdx}. There hold
\beqa
 &&   \bE_{\xi_k}[\|d^k\|^2] \ge  p_{\min} \ \bE_{\xi_{k-1}}[\|\bd^k\|^2], \label{bd-sqr} \\ [5pt]
&& \bE_{\xi_k}[\|d^k\|] \ge p_{\min} \ \bE_{\xi_{k-1}}[\|\bd^k\|]. \label{bd-norm}  
\eeqa
\end{lemma}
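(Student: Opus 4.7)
The plan is to condition on $\xi_{k-1}$, exploit the fact that the auxiliary directions $\{\bd^{k,i}\}_{i=1}^n$ have pairwise disjoint supports (each $\bd^{k,i}$ lies in $\cS_i$), and then average using the probabilities $p_i$. Once conditioned on $\xi_{k-1}$, all the $\bd^{k,i}$'s are deterministic, and $d^k$ equals $\bd^{k,i_k}$ where $i_k = i$ with probability $p_i$. Thus the proof reduces to two elementary comparisons of weighted sums with unweighted norms of the vector $\bd^k=\sum_i \bd^{k,i}$.

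For the squared version, I would compute
\[
\bE\bigl[\|d^k\|^2 \,\big|\, \xi_{k-1}\bigr] \;=\; \sum_{i=1}^n p_i\,\|\bd^{k,i}\|^2 \;\ge\; p_{\min}\sum_{i=1}^n \|\bd^{k,i}\|^2 \;=\; p_{\min}\,\|\bd^k\|^2,
\]
where the last equality uses that the supports of $\bd^{k,i}$ are disjoint across $i$, so that $\|\sum_i \bd^{k,i}\|^2=\sum_i\|\bd^{k,i}\|^2$. Taking expectation over $\xi_{k-1}$ and using the tower property then yields \eqref{bd-sqr}.

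For the norm version, the same conditioning gives $\bE[\|d^k\|\mid\xi_{k-1}]=\sum_i p_i\|\bd^{k,i}\|\ge p_{\min}\sum_i\|\bd^{k,i}\|$, and then I would invoke the elementary inequality $\sum_i a_i \ge \sqrt{\sum_i a_i^2}$ for nonnegative $a_i$ (i.e.\ $\|\cdot\|_1\ge\|\cdot\|_2$ in $\Re^n$) applied to $a_i=\|\bd^{k,i}\|$, combined again with the disjoint-support identity $\|\bd^k\|=\sqrt{\sum_i \|\bd^{k,i}\|^2}$. This gives $\bE[\|d^k\|\mid\xi_{k-1}]\ge p_{\min}\|\bd^k\|$, and taking the expectation over $\xi_{k-1}$ produces \eqref{bd-norm}.

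There is no real obstacle here; the only thing to be careful about is bookkeeping of the randomness. Specifically, one should note that $\bd^k$ and each $\bd^{k,i}$ are functions of $\xi_{k-1}$ alone (they depend on $x^k$, which is determined by $\xi_{k-1}$, and on the backtracking loop in Step (2), which is deterministic once the block index is fixed), while $d^k$ depends additionally on $i_k$. This justifies taking out the $\bd^{k,i}$-terms as constants under the inner conditional expectation, which is the only subtle point in an otherwise routine computation.
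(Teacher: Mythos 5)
Your proposal is correct and matches the paper's own argument essentially verbatim: both condition on $\xi_{k-1}$, write $\bE_{i_k}[\|d^k\|^p]=\sum_i p_i\|\bd^{k,i}\|^p$, pull out $p_{\min}$, use the disjoint supports of the $\bd^{k,i}$ together with $\|\cdot\|_1\ge\|\cdot\|_2$ for the norm case, and finish by taking expectation over $\xi_{k-1}$. Your explicit remark that the $\bd^{k,i}$ are $\xi_{k-1}$-measurable is a small clarification the paper leaves implicit, but the route is the same.
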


\begin{proof}
By \eqref{Lf-pmin} and the definitions of $d^k$ and $\bd^k$, we can observe that
\[
\ba{l}
\bE_{i_k}[\|d^k\|^2]   = \sum\limits_i p_i \|\bd^{k,i}\|^2 \ge \left(\min_i p_i\right) 
\sum\limits_i \|\bd^{k,i}\|^2 = p_{\min} \|\bd^k\|^2, \\ [10pt]
\bE_{i_k}[\|d^k\|]  = \sum\limits_i p_i \|\bd^{k,i}\| \ge \left(\min_i p_i\right)\sum\limits_i \|\bd^{k,i}\| \ge p_{\min}\sqrt{\sum\limits_i \|\bd^{k,i}\|^2} \ge p_{\min} \|\bd^k\|. 
\ea
\]
The conclusion of this lemma follows by taking expectation with respect to $\xi_{k-1}$ on both sides of the above inequalities. 
\end{proof}

\gap

We next show that the inner loops of the above RNBPG method must terminate finitely. As 
a byproduct, we provide a uniform upper bound on $\Theta_k$. 

\begin{lemma} \label{approx-lip}
Let $\{\theta_k\}$ be the sequence generated by RNBPG, $\Theta_k$ defined above, and $c$ 
defined in \eqref{c}. There hold
\bi
\item[(i)]
$\underline\aalpha \le \theta_k  \le c \quad\quad \forall k$.
\item[(ii)] $\underline\aalpha I  \preceq  \Theta_k \preceq  c I \quad\quad \forall k$.
\ei
\end{lemma}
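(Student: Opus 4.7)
The plan is to first dispatch the lower bounds, which are essentially immediate, and then focus on the upper bound, where the real work lies in showing that the inner line-search loop terminates once $\theta_k$ is large enough relative to $L_{i_k}$.

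For the lower bound in (i), note that in Step 1 we pick $\theta_k^0 \in [\underline\alpha,\bar\alpha]$, and in Step 2a we set $\theta_k = \theta_k^0 \eta^j$ for some nonnegative integer $j$. Since $\eta>1$, this gives $\theta_k \ge \theta_k^0 \ge \underline\alpha$. Hence (ii)'s lower bound $\underline\alpha I \preceq \Theta_k$ follows by applying this to each diagonal block $\theta_{k,i} I_i$.

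For the upper bound in (i), the key claim is that whenever $\theta_k \ge L_{i_k} + \sigma$, the acceptance test \eqref{reduct} is automatically satisfied; consequently the line search must terminate no later than the first $j$ with $\theta_k^0\eta^j \ge L_{i_k}+\sigma$. To prove the claim, I use \eqref{lipineq} with $i=i_k$ and $h=d^k$ (recall that $d^k\in\cS_{i_k}$, so $\|d^k\|=\|(d^k)_{i_k}\|$), giving
\[
f(x^k+d^k) \le f(x^k) + \nabla_{i_k} f(x^k)^T (d^k)_{i_k} + \tfrac{L_{i_k}}{2}\|d^k\|^2.
\]
Comparing the value of the prox objective in \eqref{eqn:prox-solution} at $s=(d^k)_{i_k}$ against $s=0$ yields
\[
\nabla_{i_k} f(x^k)^T (d^k)_{i_k} + \tfrac{\theta_k}{2}\|d^k\|^2 + \q_{i_k}(x^k_{i_k}+(d^k)_{i_k}) \le \q_{i_k}(x^k_{i_k}),
\]
which (adding $\sum_{j\ne i_k}\q_j(x^k_j)$ to both sides and combining with the Lipschitz inequality above) produces
\[
F(x^k+d^k) \le F(x^k) - \tfrac{\theta_k-L_{i_k}}{2}\|d^k\|^2.
\]
If $\theta_k \ge L_{i_k}+\sigma$, then the right-hand side is bounded by $F(x^k) - (\sigma/2)\|d^k\|^2 \le \max_{[k-M]^+\le i \le k} F(x^i) - (\sigma/2)\|d^k\|^2$, so \eqref{reduct} holds and the loop terminates.

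Given that the loop terminates at the minimal nonnegative $j$ satisfying \eqref{reduct}, either $j=0$ (so $\theta_k = \theta_k^0 \le \bar\alpha$), or $j\ge 1$ and by minimality the criterion failed at $j-1$, which by the claim above forces $\theta_k^0\eta^{j-1} < L_{i_k}+\sigma$, giving $\theta_k < \eta(L_{i_k}+\sigma) \le \eta(L_{\max}+\sigma)$. In either case $\theta_k \le c$, proving (i) and, blockwise, the upper bound of (ii). The main obstacle is really just the careful bookkeeping that links the first-order comparison for the (possibly nonconvex nonsmooth) subproblem \eqref{eqn:prox-solution} with the block Lipschitz inequality \eqref{lipineq}; once these are combined, both the finite termination of the inner loop and the uniform bound $c$ drop out together.
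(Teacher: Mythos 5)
Your proposal is correct and follows essentially the same route as the paper: the paper's proof of the upper bound is the contrapositive form of your argument, assuming $\theta_k>c$ and applying the same combination of the block Lipschitz inequality \eqref{lipineq} with the prox-objective comparison at $s=0$ to the previous trial value $\theta_k/\eta$ to contradict the minimality of $j$. Your direct formulation (acceptance is guaranteed once $\theta_k\ge L_{i_k}+\sigma$, hence the last rejected trial satisfies $\theta_k/\eta<L_{i_k}+\sigma$) is logically equivalent and also cleanly delivers the finite termination of the inner loop, which the paper notes as a byproduct.
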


\begin{proof}
(i) It is clear that $\theta_k \ge \underline\aalpha$. We now show $\theta_k \le c$ by 
dividing the proof into two cases.

Case (i) $\theta_k=\theta^0_k$. Since $\theta^0_k \le \bar\aalpha$, it follows that 
$\theta_k \le \bar\aalpha$ and the conclusion holds.

Case (ii) $\theta_k=\theta^0_k \eta^j$ for some integer $j>0$.  Suppose for contradiction 
that $\theta_k > c$. By \eqref{Lf-pmin} and \eqref{c}, we then have 
\beq \label{t-theta}
\tilde\theta_k := \theta_k/\eta > c/\eta \ge L_{\max}+\sigma \ge L_{i_k} + \sigma.
\eeq
Let $d \in \Re^N$ such that $d_\Bi=0$ for $i\neq i_k$ and 
\beq \label{dik}
d_{i_k}=\arg\min_s \left\{\nabla_{i_k} f(x^k)^T s + \frac{\tilde\theta_k}{2}
\|s\|^2 + \q_{i_k}(x^k_{i_k}+s)\right\}.
\eeq
It follows that 
\[
\nabla_{i_k} f(x^k)^T d_{i_k} +  \frac{\tilde\theta_k}{2}
\|d_{i_k}\|^2  + \q_{i_k}(x^k_{i_k}+d_{i_k})-\q_{i_k}(x^k_{i_k}) \le 0.
\]
Also, by \eqref{lk} and the definitions of $\theta_k$ and $\tilde\theta_k$, one knows that 
\beq \label{Fxd-ineq}
F(x^k + d)  >  F(x^{\ell(k)}) -\frac{\sigma}{2}  \|d\|^2.
\eeq
On the other hand, using  \eqref{lipineq}, \eqref{lk}, \eqref{t-theta}, 
\eqref{dik} and the definition of $d$, we have
\[
\ba{lcl}
F(x^k + d) &=& f(x^k+d) + \q(x^k+d) \ \le \ f(x^k) + \nabla_{i_k} f(x^k)^T d_{i_k} + \frac{L_{i_k}}{2}\|d_{i_k}\|^2 + \q(x^k+d) \\ [12pt]
&=& F(x^k) + \underbrace{\nabla_{i_k} f(x^k)^T d_{i_k} +  \frac{\tilde\theta_k}{2}
\|d_{i_k}\|^2  + \q_{i_k}(x^k_{i_k}+d_{i_k})-\q_{i_k}(x^k_{i_k})}_{\le 0} + \frac{L_{i_k}-\tilde\theta_k}{2}\|d_{i_k}\|^2 \\ [12pt]
& \le & F(x^k) + \frac{L_{i_k}-\tilde\theta_k}{2}\|d_{i_k}\|^2  \ \le \  F(x^{\ell(k)}) -\frac{\sigma}{2}  \|d\|^2,
\ea
\]
which is a contradiction to \eqref{Fxd-ineq}. Hence, $\theta_k \le c$ and 
the conclusion holds. 

(ii) Let $\theta^{k,i}$ be defined above. It follows from statement (i) that 
$\underline\aalpha \le \theta_{k,i} \le  c$, which together with the definition of $\Theta_k$ implies that statement (ii) holds.
\end{proof}

\gap

The next result provides some bound on the norm of a proximal gradient, which will be used in the 
subsequent analysis on convergence rate of RNBPG. 

\begin{lemma} 
Let $\{x^k\}$ be generated by RNBPG, $\bd^k$ and $c$ defined in \eqref{bdk} and \eqref{c}, 
respectively, and  
\beq \label{hg}
\hg^k = \arg\min_d \left\{\nabla f(x^k)^T d + \frac{1}{2} \|d\|^2 + \q(x^k+d)\right\}.
\eeq
Assume that $\Psi$ is convex. There holds
\beq \label{hdk-bdd}
\|\hg^k\| \le \frac{c}{2}\left[1+\frac{1}{\underline\aalpha} + \sqrt{1-\frac{2}{c} + \frac{1}{\underline\aalpha^2}}\ \right] 
\|\bd^k\|.
\eeq

\end{lemma}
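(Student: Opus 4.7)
My plan is to leverage the first-order optimality conditions of the two strongly convex proximal subproblems \eqref{bdk} and \eqref{hg}, exploit the monotonicity of $\partial\Psi$ (which is available here because $\Psi$ is assumed convex), and then invoke the spectral bounds on $\Theta_k$ from Lemma~\ref{approx-lip}(ii) to reduce everything to a scalar quadratic in $r:=\|\hg^k\|/\|\bd^k\|$ whose larger root is exactly the claimed constant.

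I would begin by writing down the first-order inclusions
\[
-\nabla f(x^k)-\Theta_k\bd^k\in\partial\Psi(x^k+\bd^k),\qquad
-\nabla f(x^k)-\hg^k\in\partial\Psi(x^k+\hg^k),
\]
and applying monotonicity of $\partial\Psi$ to these two (subgradient, point) pairs. The common $\nabla f(x^k)$ contribution cancels, and after expanding and using the symmetry of $\Theta_k$, one obtains the core inequality
\[
\|\hg^k\|^2+\bd^{kT}\Theta_k\bd^k \;\le\; \hg^{kT}(I+\Theta_k)\bd^k.
\]

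From here the rest is algebraic manipulation with $\underline\aalpha I\preceq\Theta_k\preceq cI$. On the left I would use $\bd^{kT}\Theta_k\bd^k\ge\underline\aalpha\|\bd^k\|^2$. On the right I would split $\hg^{kT}(I+\Theta_k)\bd^k=\hg^{kT}\bd^k+\hg^{kT}\Theta_k\bd^k$, control the first summand by Cauchy--Schwarz, and control the second by a $\Theta_k$-weighted Young inequality of the form $\hg^{kT}\Theta_k\bd^k\le\tfrac{1}{2\lambda}\hg^{kT}\Theta_k\hg^k+\tfrac{\lambda}{2}\bd^{kT}\Theta_k\bd^k$, followed by $\hg^{kT}\Theta_k\hg^k\le c\|\hg^k\|^2$ on the $\hg^k$-side and $\bd^{kT}\Theta_k\bd^k\ge\underline\aalpha\|\bd^k\|^2$ on the $\bd^k$-side. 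With a suitable choice of $\lambda$, the whole estimate collapses to a single scalar quadratic $r^2-A\,r+B\le 0$ in $r=\|\hg^k\|/\|\bd^k\|$, and \eqref{hdk-bdd} is simply the positive root $\tfrac12(A+\sqrt{A^2-4B})$.

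The main obstacle is arranging the Young-type bounds and the $\Theta_k$ spectral estimates so that the coefficients of the reduced quadratic come out as $A=c(1+1/\underline\aalpha)$ and $B=c^2/(2\underline\aalpha)+c/2$; only for these particular values does the discriminant $A^2-4B$ simplify to $c^2\bigl(1-2/c+1/\underline\aalpha^2\bigr)$ and the square root appearing in \eqref{hdk-bdd} fall out. This pins down the Young parameter $\lambda$ and prescribes how the upper spectral bound on $\Theta_k$ is used on $\hg^{kT}\Theta_k\hg^k$ and the lower one is used on $\bd^{kT}\Theta_k\bd^k$. The convexity assumption on $\Psi$ is indispensable throughout, since it is what enables the monotonicity step and hence the pivotal inequality at the very beginning of the argument.
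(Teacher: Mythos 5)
Your opening move is sound and is in fact the engine behind the result: writing the two first-order inclusions for \eqref{bdk} and \eqref{hg}, invoking monotonicity of $\partial\Psi$ (legitimate since $\Psi$ is convex), and cancelling the common $\nabla f(x^k)$ term does yield the core inequality $\|\hg^k\|^2+(\bd^k)^T\Theta_k\bd^k\le(\hg^k)^T(I+\Theta_k)\bd^k$. The paper itself does none of this explicitly --- its entire proof is a one-line appeal to \cite[Lemma 3.5]{LuZh12} with $H=\Theta_k$, $\tilde H=I$, $Q=\Theta_k^{-1}$, combined with the spectral bounds $\underline\aalpha I\preceq\Theta_k\preceq cI$ of Lemma \ref{approx-lip}(ii) --- but your inequality is precisely what underlies that cited lemma, so up to this point you are on the right track.

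The gap is in the reduction to the scalar quadratic. First, a directional error: to upper-bound the Young remainder $\frac{\lambda}{2}(\bd^k)^T\Theta_k\bd^k$ you would need the upper bound $(\bd^k)^T\Theta_k\bd^k\le c\|\bd^k\|^2$, not the lower bound $\underline\aalpha\|\bd^k\|^2$ you invoke ``on the $\bd^k$-side''; the lower bound is only usable if you instead cancel that term against the $(\bd^k)^T\Theta_k\bd^k$ on the left, which forces $\lambda\le 2$, while keeping the coefficient of $\|\hg^k\|^2$ positive after $\frac{1}{2\lambda}(\hg^k)^T\Theta_k\hg^k\le\frac{c}{2\lambda}\|\hg^k\|^2$ forces $\lambda>c/2$; these are incompatible whenever $c\ge 4$, which is the typical regime since $c\ge\max\{\bar\aalpha,\eta(L_{\max}+\sigma)\}$. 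More fundamentally, no term-by-term worst-case estimate of this kind produces the coefficients $A=c(1+1/\underline\aalpha)$ and $B=\frac{c^2}{2\underline\aalpha}+\frac{c}{2}$ you need. Pure Cauchy--Schwarz gives $r^2-(1+c)r+\underline\aalpha\le 0$, whose positive root is incomparable with the constant in \eqref{hdk-bdd} (smaller for $\underline\aalpha=1,c=10$; larger for $\underline\aalpha=9,c=10$), so it cannot be massaged into the stated bound; routing everything through $w=\Theta_k\bd^k$ gives the root $\frac{c}{2}\bigl[1+\frac{1}{\underline\aalpha}+\sqrt{(1+\frac{1}{\underline\aalpha})^2-\frac{4}{c}}\,\bigr]$, whose discriminant exceeds that of \eqref{hdk-bdd} by $\frac{2}{\underline\aalpha}-\frac{2}{c}\ge 0$ and is therefore strictly weaker. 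The constant in \eqref{hdk-bdd} corresponds to the quadratic $\rho^2-\bigl(1+\lambda_{\max}(\Theta_k^{-1})\bigr)\rho+\frac{1}{2}\bigl(\lambda_{\max}(\Theta_k^{-1})+\lambda_{\min}(\Theta_k^{-1})\bigr)\le 0$ in $\rho=\|\hg^k\|/\|\Theta_k\bd^k\|$, and obtaining the averaged constant term $\frac{1}{2}(\lambda_{\max}+\lambda_{\min})$ rather than $\lambda_{\min}$ requires exploiting the correlation between the directions in which $\Theta_k$ is large and the directions realizing the cross term --- the delicate part of \cite[Lemma 3.5]{LuZh12} that Cauchy--Schwarz plus Young applied to worst-case eigenvalues cannot capture. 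Either carry out that sharper argument in full or, as the paper does, cite the lemma.
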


\begin{proof}
The conclusion of this lemma follows from \eqref{bdk}, \eqref{hg}, Lemma \ref{approx-lip} (ii), and 
\cite[Lemma 3.5] {LuZh12} with $H=\Theta_k$, $\tilde H=I$, $Q=\Theta^{-1}_k$, $d=\bd^k$ 
and $\td = \hg^k$.
\end{proof}

We note that by the definition in~\eqref{c}, we have 
$c\geq\overline{\aalpha}>\underline{\aalpha}$, which implies
\[
  1-\frac{2}{c}+\frac{1}{\underline{\aalpha}^2}
  =\left(1-\frac{1}{\underline{\aalpha}}\right)^2 + \frac{2}{\underline{\aalpha}}-\frac{2}{c} > 0.
\]
Therefore, the expression under the square root in~\eqref{hdk-bdd}
is always positive.

\subsection{Convergence of expected objective value}

In this subsection we show that the sequence of expected objective values generated by the 
method converge to the expected limit of the objective values obtained by a random single run of the method.

The following lemma studies uniform continuity of the expectation of $F$ with respect to  
random sequences. 

\begin{lemma} \label{diff-randseqs}
Suppose that $F$ is uniform continuous in some $S \subseteq \dom(F)$.  Let $y^k$ and $z^k$ 
be two random vectors in $S$ generated from $\xi_{k-1}$. Assume that there exists $C>0$ such that $|F(y^k)-F(z^k)| \le C$ for all $k$, and moreover, 
\[
\lim_{k \to \infty} \bE_{\xi_{k-1}}[\|y^k-z^k\|] =0.
\]
Then there hold
\[
\lim_{k \to \infty} \bE_{\xi_{k-1}} [|F(y^k) - F(z^k)|] = 0,  \quad\quad\quad \lim_{k \to \infty} \bE_{\xi_{k-1}} [F(y^k) - F(z^k)] = 0.
\]
\end{lemma}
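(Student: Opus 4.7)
The plan is to prove the first limit using a standard split into a ``good'' event (where $\|y^k-z^k\|$ is small, so uniform continuity controls $|F(y^k)-F(z^k)|$ pointwise) and a ``bad'' event (where $\|y^k-z^k\|$ is large, which has vanishing probability by Markov), and then deduce the second limit from the first via $|\bE[X]|\le \bE[|X|]$.

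Concretely, I would fix an arbitrary $\epsilon>0$ and invoke uniform continuity of $F$ on $S$ to produce $\delta>0$ such that for all $u,v\in S$ with $\|u-v\|\le\delta$ one has $|F(u)-F(v)|\le\epsilon$. For each $k$, decompose
\[
\bE_{\xi_{k-1}}\!\left[|F(y^k)-F(z^k)|\right]
= \bE_{\xi_{k-1}}\!\left[|F(y^k)-F(z^k)|\,\mathbf{1}_{\{\|y^k-z^k\|\le\delta\}}\right]
+ \bE_{\xi_{k-1}}\!\left[|F(y^k)-F(z^k)|\,\mathbf{1}_{\{\|y^k-z^k\|>\delta\}}\right].
\]
The first term is bounded by $\epsilon$ using uniform continuity. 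The second term is bounded by $C\cdot \P_{\xi_{k-1}}(\|y^k-z^k\|>\delta)$ using the hypothesis $|F(y^k)-F(z^k)|\le C$, and Markov's inequality gives $\P_{\xi_{k-1}}(\|y^k-z^k\|>\delta)\le \bE_{\xi_{k-1}}[\|y^k-z^k\|]/\delta$, which tends to $0$ by assumption. Hence $\limsup_{k\to\infty} \bE_{\xi_{k-1}}[|F(y^k)-F(z^k)|] \le \epsilon$, and letting $\epsilon\downarrow 0$ yields the first limit.

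The second limit then follows immediately, since
\[
\bigl|\bE_{\xi_{k-1}}[F(y^k)-F(z^k)]\bigr| \;\le\; \bE_{\xi_{k-1}}[|F(y^k)-F(z^k)|] \;\to\; 0.
\]

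There is no real obstacle here; the only subtlety is to make sure $y^k,z^k$ are actually measurable random vectors taking values in $S$ so that the conditional expectations and Markov bound are legitimate, but that is guaranteed by the hypothesis that they are ``random vectors in $S$ generated from $\xi_{k-1}$.'' The bound $|F(y^k)-F(z^k)|\le C$ is used only to keep the ``bad event'' term finite and to let Markov do its job; without such a bound the argument would need an additional uniform integrability hypothesis.
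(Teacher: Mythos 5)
Your proof is correct and follows essentially the same route as the paper's: a split into the event $\{\|y^k-z^k\|\ge\delta\}$ (controlled by the bound $C$ together with Markov's inequality) and its complement (controlled by uniform continuity), followed by $|\bE[X]|\le\bE[|X|]$ for the second limit. The only difference is cosmetic — you write the decomposition with indicators and a $\limsup$ argument, while the paper writes it with conditional expectations and an $\epsilon/2$ budget.
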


\begin{proof}
Since $F$ is uniformly continuous in $S$, it follows that given any $\epsilon>0$, there exists 
$\delta_\epsilon >0$ such that $|F(x)-F(y)| < \epsilon/2$ for all $x,y \in S$ satisfying 
$\|x-y\| < \delta_\epsilon$. Using these relations, the Markov inequality, and the assumption that 
$|F(y^k)-F(z^k)| \le C$ for all $k$ and  $\lim_{k \to \infty} \bE_{\xi_{k-1}}[\|\Delta^k\|] =0$, 
where $\Delta^k = y^k-z^k$, we obtain that for sufficiently large $k$, 
\[
\ba{lcl}
\bE_{\xi_{k-1}}[|F(y^k) - F(z^k)|] 
&=&   \bE_{\xi_{k-1}}\left[|F(y^k) - F(z^k)| \bigm| \|\Delta^k\| \ge \delta_\epsilon\right] \P(\|\Delta^k\|\ge \delta_\epsilon) \\ [8pt]
 & & +  \ \bE_{\xi_{k-1}}
\left[|F(y^k) - F(z^k)| \bigm|\|\Delta^k\| <\delta_\epsilon\right] \P(\|\Delta^k\| < \delta_\epsilon) \\ [8pt]
&\le& \frac{C\bE_{\xi_{k-1}}[\|\Delta^k\|]}{\delta_\epsilon} + \frac{\epsilon}{2}  \ \le \ \epsilon.
\ea
\]
 Due to the arbitrarily of $\epsilon$, we see that the first statement of 
this lemma holds. The second statement  immediately follows from the first 
statement and the well-known inequality 
\[
|\bE_{\xi_{k-1}} [F(y^k) - F(z^k)]| \ \le \ \bE_{\xi_{k-1}}\left[|F(y^k) - F(z^k)|\right].
\]
\end{proof}

\gap

We are ready to establish the first main result, that is, the expected objective values 
generated by the RNBPG method converge to the expected limit of the objective values 
obtained by a random single run of the method.

\begin{theorem} \label{function-expect}
Let $\{x^k\}$ and $\{d^k\}$ be the sequences generated by the RNBPG method. Assume that $F$ 
is uniform continuous in $\Omega(x^0)$, where $\Omega(x^0)$ is defined in \eqref{omega}.
Then the following statements hold:
\bi
\item[(i)]  $\lim_{k \to \infty} [\|d^k\|] = 0$ and  $\lim_{k \to \infty} F(x^k)  = F^*_{\xi_\infty}$ for some $F^*_{\xi_\infty} \in \Re$, where $\xi_\infty = \{i_1,i_2, \cdots\}$.
\item[(ii)] $\lim_{k \to \infty} \bE_{\xi_{k}} [\|d^k\|] = 0$ and 
\beq \label{limit-expfun}
\lim_{k \to \infty} \bE_{\xi_{k-1}} [F(x^k)] =  \lim_{k \to \infty} \bE_{\xi_{k-1}}[F(x^{\ell(k)})] = \bE_{\xi_\infty}[F^*_{\xi_\infty}].
\eeq 
\ei
\end{theorem}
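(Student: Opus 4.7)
The plan is to carry out a Grippo--Lampariello--Lucidi--style nonmonotone analysis pathwise (for each fixed realization $\xi_\infty$) to obtain~(i), and then lift to expectations via Lemma~\ref{diff-randseqs} and bounded convergence to obtain~(ii).

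\textbf{Part (i).} First I would set $V_k:=F(x^{\ell(k)})$ and verify that $V_k$ is non-increasing along any realization $\xi_\infty$: since \eqref{reduct} gives $F(x^{k+1})\le V_k$ and the reference window $\{[k-M]^+,\ldots,k\}$ shifts by only one step, a short case analysis on whether $k+1\le M$ yields $V_{k+1}\le V_k$ for all $k\ge 0$. As $F$ is bounded below by $F^\star$, $V_k$ must converge to some $F^*_{\xi_\infty}\in\mathbb{R}$. Applying \eqref{reduct} at iteration index $\ell(k+1)-1$ gives
\[
V_{k+1}\;\le\; V_{\ell(k+1)-1}-\frac{\sigma}{2}\,\|d^{\ell(k+1)-1}\|^{2},
\]
and since $\ell(k+1)-1\to\infty$, both $V_{k+1}$ and $V_{\ell(k+1)-1}$ tend to $F^*_{\xi_\infty}$, forcing $\|d^{\ell(k+1)-1}\|\to 0$. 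The whole trajectory stays in $\Omega(x^0)$ (because $F(x^k)\le V_k\le V_0=F(x^0)$), so uniform continuity of $F$ on $\Omega(x^0)$ yields $F(x^{\ell(k+1)-1})\to F^*_{\xi_\infty}$. Iterating this one-step-back argument, by induction on $j\in\{0,1,\ldots,M\}$ I would obtain $\|d^{\ell(k+1)-j-1}\|\to 0$ and $F(x^{\ell(k+1)-j-1})\to F^*_{\xi_\infty}$. Since every $k\ge M$ equals $\ell(k+M+1)-j$ for some $j\in\{1,\ldots,M+1\}$, this delivers $\|d^k\|\to 0$ and $F(x^k)\to F^*_{\xi_\infty}$.

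\textbf{Part (ii).} From \eqref{reduct} one gets the deterministic bound $\|d^k\|^{2}\le(2/\sigma)(F(x^0)-F^\star)$, so the pathwise convergence $\|d^k\|\to 0$ is dominated by a constant and the bounded convergence theorem gives $\mathbb{E}_{\xi_k}[\|d^k\|]\to 0$. For the objective values I would invoke Lemma~\ref{diff-randseqs} with $y^k=x^k$ and $z^k=x^{\ell(k)}$: both lie in $\Omega(x^0)$, the uniform bound $|F(y^k)-F(z^k)|\le F(x^0)-F^\star$ holds, and the inequality $\|x^k-x^{\ell(k)}\|\le\sum_{i=k-M}^{k-1}\|d^i\|$ together with the first assertion of~(ii) applied to finitely many shifted indices gives $\mathbb{E}_{\xi_{k-1}}[\|y^k-z^k\|]\to 0$. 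The lemma then produces $\mathbb{E}_{\xi_{k-1}}[F(x^k)-F(x^{\ell(k)})]\to 0$. Finally, since $V_k$ is pathwise non-increasing and uniformly bounded, monotone (or bounded) convergence yields $\mathbb{E}_{\xi_{k-1}}[V_k]\to\mathbb{E}_{\xi_\infty}[F^*_{\xi_\infty}]$, and \eqref{limit-expfun} follows.

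\textbf{Main obstacle.} The delicate step is the backward induction in part~(i): at each $j$, I need both the line-search inequality \emph{and} uniform (not merely pointwise) continuity of $F$ on $\Omega(x^0)$ to upgrade $\|d^{\ell(k+1)-j-1}\|\to 0$ to $F(x^{\ell(k+1)-j-1})\to F^*_{\xi_\infty}$, which then feeds the next step of the induction. Some care is also required to confirm that the shifted indices $\ell(\cdot)-j-1$ diverge to infinity so that $V_{\ell(k+1)-j-1}$ genuinely shares the limit~$F^*_{\xi_\infty}$; once this is in place, the passage to expectations in part~(ii) is essentially a direct application of Lemma~\ref{diff-randseqs} plus a bounded/monotone convergence argument.
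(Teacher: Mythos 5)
Your proof is correct, and part (i) is essentially the paper's own argument: the Grippo--Lampariello--Lucidi backward induction on $j$, driven by the line-search inequality applied at the shifted indices $\ell(\cdot)-j-1$ and upgraded from $\|d^{\ell(\cdot)-j-1}\|\to 0$ to $F(x^{\ell(\cdot)-j-1})\to F^*_{\xi_\infty}$ via uniform continuity of $F$ on $\Omega(x^0)$; your observation that every $k\ge M$ can be written as $\ell(k+M+1)-j$ for some $j\in\{1,\dots,M+1\}$ plays the role of the paper's telescoping identity $x^{\ell(k)}=x^{k-M-1}+\sum_{j=1}^{M+1}\tilde d^{\ell(k)-j}$. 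Where you genuinely diverge is part (ii). The paper carries a second, parallel track of the same induction at the level of expectations (establishing \eqref{dF-limit} alongside \eqref{dF-limit-s}, using Lemma \ref{diff-randseqs} and the tower property $\bE_{\xi_{k-1}}[F(x^{\ell(k-M-1)})]=\bE_{\xi_{k-M-2}}[F(x^{\ell(k-M-1)})]$ at each step), and only invokes dominated convergence at the very end to identify the limit as $\bE_{\xi_\infty}[F^*_{\xi_\infty}]$. You instead exploit that the pathwise limits of part (i) hold for every realization and that $\|d^k\|$, $F(x^k)$ and $F(x^{\ell(k)})$ are uniformly bounded (via $\tfrac{\sigma}{2}\|d^k\|^2\le F(x^0)-F^\star$ and $F^\star\le F(x^k)\le F(x^0)$), so bounded/dominated convergence delivers all the expectation limits in one stroke; Lemma \ref{diff-randseqs} is then only needed to compare $\bE_{\xi_{k-1}}[F(x^k)]$ with $\bE_{\xi_{k-1}}[F(x^{\ell(k)})]$, and even that could be absorbed into the same dominated-convergence step. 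Your route is more economical and avoids the duplicated induction; the paper's route has the mild advantage of producing the expectation-level decay rates (e.g.\ inequality \eqref{F-iter-1}) as explicit byproducts, which it reuses in the convergence-rate analysis of Theorem \ref{complexity}. Both arguments are sound.
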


\begin{proof}
By \eqref{reduct} and \eqref{lk}, we have
\beq \label{reduct-k}
F(x^{k+1}) \ \le \  F(x^{\ell(k)}) - \frac{\sigma}{2} \|d^k\|^2\quad\quad \forall k \ge 0.
\eeq
Hence, $F(x^{k+1}) \le  F(x^{\ell(k)})$, which together with \eqref{lk} implies that $F(x^{\ell(k+1)})  \le F(x^{\ell(k)})$. It then follows that
\[
\bE_{\xi_k}[F(x^{\ell(k+1)})] \ \le \ \bE_{\xi_{k-1}}[F(x^{\ell(k)})] \quad\quad\forall k \ge 1.
\]
Hence, $\{F(x^{\ell(k)})\}$ and $\{\bE_{\xi_{k-1}}[F(x^{\ell(k)})]\}$ are non-increasing. Since $F$ is bounded below, so are $\{F(x^{\ell(k)})\}$ 
and $\{\bE_{\xi_{k-1}}[F(x^{\ell(k)})]\}$. It follows that there exist some $F^*_{\xi_\infty}$, $\tF^*\in\Re$ such that
\beq \label{limit-0}
\lim\limits_{k \to \infty} F(x^{\ell(k)}) = F^*_{\xi_\infty}, \quad\quad
\lim\limits_{k \to \infty} \bE_{\xi_{k-1}}[F(x^{\ell(k)})] =\tF^*.
\eeq

We first show by induction that the following relations hold for all $j \ge 1$:
\beqa
 &\lim\limits_{k\to \infty} \|d^{\ell(k)-j}\| = 0,  \quad\quad 
&\lim\limits_{k\to \infty} F(x^{\ell(k)-j}) = F^*_{\xi_\infty}. \label{dF-limit-s} \\
&\lim\limits_{k\to \infty} \bE_{\xi_{k-1}}[\|d^{\ell(k)-j}\|] = 0, \quad\quad 
& \lim\limits_{k\to \infty} \bE_{\xi_{k-1}}[F(x^{\ell(k)-j})] = \tF^*. \label{dF-limit}
\eeqa
Indeed, replacing $k$ by $\ell(k)-1$ in \eqref{reduct-k}, we obtain that
\[
F(x^{\ell(k)})  \ \le \  F(x^{\ell(\ell(k)-1)}) - \frac{\sigma}{2}  \|d^{\ell(k)-1}\|^2 \quad\quad \forall k \ge M+1,
\]
which together with $\ell(k) \ge k-M$ and monotonicity of $\{F(x^{\ell(k)})\}$ yields
\beq \label{F-iter-s}
F(x^{\ell(k)})  \ \le \  F(x^{\ell(k-M-1)}) - \frac{\sigma}{2} \|d^{\ell(k)-1}\|^2\quad\quad \forall k \ge M+1.
\eeq
Then we have 
\beq \label{F-iter}
\bE_{\xi_{k-1}}[F(x^{\ell(k)})]  \ \le \  \bE_{\xi_{k-1}}[F(x^{\ell(k-M-1)})] - \frac{\sigma}{2} \bE_{\xi_{k-1}}[\|d^{\ell(k)-1}\|^2] \quad\quad \forall k \ge M+1.
\eeq
Notice that
\[
\bE_{\xi_{k-1}}[F(x^{\ell(k-M-1)})] \ =\ \bE_{\xi_{k-M-2}}[F(x^{\ell(k-M-1)})] \quad\quad\forall k \ge M+1.
\]
It follows from this relation and \eqref{F-iter} that
\beq \label{F-iter-1}
\bE_{\xi_{k-1}}[F(x^{\ell(k)})] \ \le \ \bE_{\xi_{k-M-2}}[F(x^{\ell(k-M-1)})] -
\frac{\sigma}{2} \bE_{\xi_{k-1}}[\|d^{\ell(k)-1}\|^2]\quad\quad \forall k \ge M+1.
\eeq
In view of \eqref{limit-0}, \eqref{F-iter-s}, \eqref{F-iter-1}, and  
$ (\bE_{\xi_{k-1}}[\|d^{\ell(k)-1}\|])^2 \le \bE_{\xi_{k-1}}[\|d^{\ell(k)-1}\|^2]$,  one can have
\beq \label{limit-d}
\lim\limits_{k\to\infty} \|d^{\ell(k)-1}\| = 0, \quad\quad \lim\limits_{k\to\infty} \bE_{\xi_{k-1}}[\|d^{\ell(k)-1}\|] = 0.
\eeq
One can also observe that $F(x^k) \le F(x^0)$ and hence $\{x^k\} \subset \Omega(x^0)$. 
Using this fact,  \eqref{limit-0}, \eqref{limit-d}, Lemma \ref{diff-randseqs}, and uniform continuity 
of $F$ over $\Omega(x^0)$, we obtain that
\[
\ba{l}
\lim\limits_{k\to\infty} F(x^{\ell(k)-1}) \ = \ \lim\limits_{k\to\infty}F(x^{\ell(k)}) \ = \ F^*_{\xi_\infty}, \\ [14pt]
\lim\limits_{k\to\infty}\bE_{\xi_{k-1}} [F(x^{\ell(k)-1})] \ = \ \lim\limits_{k\to\infty}\bE_{\xi_{k-1}} [F(x^{\ell(k)})] \ = \ \tF^*.
\ea
\]
Therefore, \eqref{dF-limit-s} and \eqref{dF-limit} hold for $j=1$. Suppose now that they hold for 
some $j \ge 1$. We need to show that they also hold for $j+1$. Replacing $k$ by $\ell(k)-j-1$ in 
\eqref{reduct-k} gives
\[
F(x^{\ell(k)-j})  \ \le \ F(x^{\ell(\ell(k)-j-1)}) - \frac{\sigma}{2} \|d^{\ell(k)-j-1}\|^2\quad\quad \forall k \ge M+j+1.
\]
By this relation, $\ell(k) \ge k-M$, and monotonicity of $\{F(x^{\ell(k)})\}$, one can have
\beq \label{F-iterj-s}
F(x^{\ell(k)-j})  \ \le \  F(x^{\ell(k-M-j-1)}) - \frac{\sigma}{2} \|d^{\ell(k)-j-1}\|^2\quad\quad \forall k \ge M+j+1.
\eeq
Then we obtain that 
\[
\bE_{\xi_{k-1}}[F(x^{\ell(k)-j})] \ \le \  \bE_{\xi_{k-1}}[F(x^{\ell(k-M-j-1)})] - \frac{\sigma}{2} \|d^{\ell(k)-j-1}\|^2\quad\quad \forall k \ge M+j+1.
\]
Notice that
\[
\bE_{\xi_{k-1}}[F(x^{\ell(k-M-j-1)})] \ = \ \bE_{\xi_{k-M-j-2}}[F(x^{\ell(k-M-j-1)})]\quad\quad \forall k \ge M+j+1.
\]
It follows from these two relations that
\beq \label{F-iterj}
\bE_{\xi_{k-1}}[F(x^{\ell(k)-j})] \le \bE_{\xi_{k-M-j-2}}[F(x^{\ell(k-M-j-1)})] -
\frac{\sigma}{2} \bE_{\xi_{k-1}}[\|d^{\ell(k)-j-1}\|^2],\quad\forall k \ge M+j+1.
\eeq
Using \eqref{limit-0}, \eqref{F-iterj-s}, \eqref{F-iterj}, the induction hypothesis, and a similar 
argument as above, we can obtain that
\[
\lim\limits_{k\to\infty}\|d^{\ell(k)-j-1}\| = 0, \quad\quad \lim\limits_{k\to\infty} \bE_{\xi_{k-1}}[\|d^{\ell(k)-j-1}\|] = 0.
\]
These relations, together with Lemma \ref{diff-randseqs}, uniform continuity 
of $F$ over $\Omega(x^0)$ and the induction hypothesis, yield
\[
\ba{l}
\lim\limits_{k\to\infty} F(x^{\ell(k)-j-1}) \ = \ \lim\limits_{k\to\infty} F(x^{\ell(k)-j}) \ = \ F^*_{\xi_\infty}, \\ [14pt]
\lim\limits_{k\to\infty}\bE_{\xi_{k-1}} [F(x^{\ell(k)-j-1})] \ = \ \lim\limits_{k\to\infty}\bE_{\xi_{k-1}} [F(x^{\ell(k)-j})] \ = \ \tF^*.
\ea
\]
Hence, \eqref{dF-limit-s} and \eqref{dF-limit} hold for $j+1$, and the proof of \eqref{dF-limit-s} and \eqref{dF-limit} is completed.

For all $k \ge 2M+1$, we define
\[
\td^{\ell(k)-j} = \left\{
\ba{ll}
d^{\ell(k)-j}   & \ \mbox{if} \ j \le \ell(k) - (k-M-1), \\
0 & \  \mbox{otherwise},
\ea
\right. \quad\quad j=1, \ldots, M+1.
\]
It is not hard to observe that
\beqa
\|\td^{\ell(k)-j}\| &\le & \|d^{\ell(k)-j}\|,  \label{td} \\
x^{\ell(k)} &=& x^{k-M-1} + \sum\limits_{j=1}^{M+1} \td^{\ell(k)-j}. \label{xlk}
\eeqa
It follows from \eqref{dF-limit-s}, \eqref{dF-limit} and \eqref{td} that 
$\lim\limits_{k\to \infty} \|\td^{\ell(k)-j}\| = 0$ and 
$\lim\limits_{k\to \infty} \bE_{\xi_{k-1}}[\|\td^{\ell(k)-j}\|] = 0$ for  $j=1,\ldots, M+1$. 
Hence, 
\[
\lim\limits_{k\to \infty}\left\|\sum\limits_{j=1}^{M+1} \td^{\ell(k)-j}\right\| = 0, \quad\quad
\lim\limits_{k\to \infty} \bE_{\xi_{k-1}}\left[\left\|\sum\limits_{j=1}^{M+1} \td^{\ell(k)-j}\right\|\right] = 0.
\]
These, together with \eqref{dF-limit-s}, \eqref{dF-limit}, \eqref{xlk}, Lemma \ref{diff-randseqs} 
and uniform continuity of $F$ over $\Omega(x^0)$, imply that
\beqa
\lim\limits_{k\to\infty} F(x^{k-M-1}) &=&  \lim\limits_{k\to\infty}
F(x^{\ell(k)})  =  F^*_{\xi_\infty}, \label{Fx-lim-s} \\ [4pt]
\lim\limits_{k\to\infty} \bE_{\xi_{k-1}}[F(x^{k-M-1})] & = & \lim\limits_{k\to\infty}
\bE_{\xi_{k-1}} [F(x^{\ell(k)})]  =  \tF^*. \label{Fx-lim}
\eeqa
It follows from \eqref{Fx-lim-s} that $\lim\limits_{k\to\infty} F(x^k) =  F^*_{\xi_\infty}$. 
Using this, \eqref{reduct-k} and \eqref{limit-0}, one can see that $\lim_{k\to \infty}\|d^k\|=0$. 
Hence, statement (i) holds. Notice that $\bE_{\xi_{k-M-2}}[F(x^{k-M-1})]=\bE_{\xi_{k-1}}[F(x^{k-M-1})]$. 
Combining this relation with \eqref{Fx-lim},  we have
\[
\lim\limits_{k\to\infty}\bE_{\xi_{k-M-2}}[F(x^{k-M-1})] \ = \ \tF^*,
\]
which is equivalent to
\[
\lim\limits_{k\to\infty} \bE_{\xi_{k-1}}[F(x^k)] \ = \ \tF^*.
\]
In addition, it follows from \eqref{reduct-k} that
\beq \label{descent-k}
\bE_{\xi_k} [F(x^{k+1})] \ \le \  \bE_{\xi_k}[F(x^{\ell(k)})] - \frac{\sigma}{2} \bE_{\xi_k}[\|d^k\|^2] 
\quad\quad \forall k \ge 0.
\eeq
Notice that 
\beq \label{lim-expfuns}
\lim\limits_{k\to\infty} \bE_{\xi_k}[F(x^{\ell(k)})] \ =\ 
\lim\limits_{k\to\infty}\bE_{\xi_{k-1}}[F(x^{\ell(k)})] \ \ = \ \tF^* \ = \ \lim\limits_{k\to\infty} \bE_{\xi_{k}}[F(x^{k+1})].
\eeq
Using \eqref{descent-k} and \eqref{lim-expfuns},  we conclude that $\lim_{k\to \infty}\bE_{\xi_k} [\|d^k\|]=0$. 

Finally, we claim that $\lim_{k \to \infty}\bE_{\xi_{k-1}} [F(x^k)]  = \bE_{\xi_\infty}[F^*_{\xi_\infty}]$. 
Indeed,  we know that $\{x^k\} \subset \Omega(x^0)$. Hence, $F^* \le F(x^k) \le F(x^0)$, where 
$F^*=\min_x F(x)$.  It follows that 
\[
|F(x^k)| \le \max\{|F(x^0)|, |F^*|\} \quad\quad \forall k.
\]
Using this relation and dominated convergence theorem (see, for example, \cite[Theorem 5.4]{Bil95}), we have
\[
\lim_{k \to \infty} \bE_{\xi_{\infty}} [F(x^k)] = \bE_{\xi_{\infty}}\left[\lim_{k \to \infty} F(x^k)\right] = 
\bE_{\xi_\infty}\left[F^*_{\xi_\infty}\right],
\]
which, together with $\lim_{k \to \infty}\bE_{\xi_{k-1}} [F(x^k)] = \lim_{k \to \infty}\bE_{\xi_{\infty}} [F(x^k)]$, implies that $\lim_{k \to \infty}\bE_{\xi_{k-1}} [F(x^k)]  = \bE_{\xi_\infty}[F^*_{\xi_\infty}]$. Hence, statement (ii) holds.

\end{proof}

\subsection{Convergence to stationary points}

In this subsection we show that when $k$ is sufficiently large, $x^k$ is an approximate 
stationary point of \eqref{NLP}  with high probability. 

\begin{theorem} \label{stationary-pt}
Let $\{x^k\}$ be generated by RNBPG, and $\bd^k$ and $\bx^k$ defined in \eqref{bdx}.  
Assume that $F$ is uniformly continuous and $\Psi$ is locally Lipschitz continuous in 
$\Omega(x^0)$, where $\Omega(x^0)$ is defined in \eqref{omega}.  Then there hold
\bi
\item[(i)]
\beq \label{statement-1}
\lim\limits_{k\to\infty}\bE_{\xi_{k-1}}[\|\bd^k\|] = 0, \quad\quad\quad
\lim\limits_{k\to\infty} \bE_{\xi_{k-1}} [\dist(-\nabla f(\tx^k), \partial \q(\tx^k)] \ = \ 0,
\eeq
where $\partial \Psi$ denotes the Clarke subdifferential of $\Psi$. 
\item[(ii)] Any accumulation point of $\{x^k\}$ is a stationary point of problem \eqref{NLP} almost surely.
\item[(iii)] Suppose further that $F$ is uniformly continuous in 
\beq \label{cS}
\cS =\left\{x: F(x) \le F(x^0)+\max\left\{\frac{n}{\sigma}\left|L_f -\underline\aalpha\right|, 1\right\} (F(x^0)-F^*)\right\}.
\eeq
Then $\lim_{k\to\infty}\bE_{\xi_{k-1}}[|F(x^k)-F(\tx^k)|]=0$. Moreover, for any $\epsilon>0$ and $\rho \in (0,1)$, there exists $K$ such that  for all $k \ge K$, 
\[
\P\left(\max\left\{\|x^k - \tx^k\|, |F(x^k)-F(\tx^k)|, \dist(-\nabla f(\tx^k), \partial \q(\tx^k))\right\}  \le  \eps \right) \ \ge  \ 1-\rho.  
\]
\ei
\end{theorem}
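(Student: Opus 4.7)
The plan is to treat the three statements in order, relying throughout on the bound $\bE_{\xi_{k-1}}[\|\bd^k\|]\to 0$ and on the first-order optimality of the block subproblem~\eqref{subprob-soln}. For statement~(i), I first combine~\eqref{bd-norm} of Lemma~\ref{exp-dk} with Theorem~\ref{function-expect}(ii), which gives $\bE_{\xi_k}[\|d^k\|]\to 0$, to deduce $\bE_{\xi_{k-1}}[\|\bd^k\|]\to 0$. For the subdifferential limit I apply Fermat's rule to each block subproblem in~\eqref{subprob-soln}, obtaining $-\nabla_i f(x^k)-\theta_{k,i}(\bd^{k,i})_i\in\partial\q_i(\tx^k_i)$; block separability of $\q$ then yields $-\nabla f(x^k)-\Theta_k\bd^k\in\partial\q(\tx^k)$. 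Combining with~\eqref{g-lip} and Lemma~\ref{approx-lip}(ii),
\[
\dist(-\nabla f(\tx^k),\partial\q(\tx^k))\le\|\nabla f(\tx^k)-\nabla f(x^k)\|+\|\Theta_k\bd^k\|\le(L_f+c)\|\bd^k\|,
\]
and taking $\bE_{\xi_{k-1}}$ closes~(i).

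For statement~(ii), the convergence $\bE_{\xi_{k-1}}[\|\bd^k\|]\to 0$ implies $\|\bd^k\|\to 0$ in probability, hence almost surely along a subsequence. Given any accumulation point $x^\ast$ of $\{x^k\}$ on a sample path of full measure, with $x^{k_l}\to x^\ast$, I extract a further subsequence $k_{l_j}$ along which $\bd^{k_{l_j}}\to 0$ as well; then $\tx^{k_{l_j}}\to x^\ast$. Using the inclusion from~(i), boundedness of $\Theta_k$ (Lemma~\ref{approx-lip}(ii)), continuity of $\nabla f$, and outer semi-continuity of the Clarke subdifferential (since $\q$ is locally Lipschitz on $\Omega(x^0)$), passing to the limit yields $-\nabla f(x^\ast)\in\partial\q(x^\ast)$, i.e., $x^\ast$ is a stationary point almost surely.

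For statement~(iii) the key step is verifying $\tx^k\in\cS$. Using $s=0$ as a competitor in~\eqref{subprob-soln} gives $\nabla_i f(x^k)^T(\bd^{k,i})_i+\tfrac{\theta_{k,i}}{2}\|(\bd^{k,i})_i\|^2+\q_i(\tx^k_i)-\q_i(x^k_i)\le 0$; summing over $i$ and adding the descent inequality $f(\tx^k)\le f(x^k)+\nabla f(x^k)^T\bd^k+\tfrac{L_f}{2}\|\bd^k\|^2$ implied by~\eqref{g-lip} produces $F(\tx^k)-F(x^k)\le\tfrac{1}{2}(L_f-\underline\aalpha)\|\bd^k\|^2$. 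From~\eqref{F-reduct}, together with $F(x^{\ell(k)})\le F(x^0)$ (non-increasing in~$k$) and $F(x^k+\bd^{k,i})\ge F^\star$, I get $\|\bd^{k,i}\|^2\le 2(F(x^0)-F^\star)/\sigma$ for every~$i$, so $\|\bd^k\|^2\le 2n(F(x^0)-F^\star)/\sigma$. The two estimates together place $F(\tx^k)$ below the threshold defining $\cS$ in~\eqref{cS}. Uniform continuity of $F$ on $\cS$ and Lemma~\ref{diff-randseqs} applied to $(y^k,z^k)=(\tx^k,x^k)$, for which $\bE_{\xi_{k-1}}[\|y^k-z^k\|]=\bE_{\xi_{k-1}}[\|\bd^k\|]\to 0$ by~(i), then yield $\bE_{\xi_{k-1}}[|F(x^k)-F(\tx^k)|]\to 0$. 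For the high-probability bound I invoke Markov's inequality on the three nonnegative quantities $\|x^k-\tx^k\|=\|\bd^k\|$, $|F(x^k)-F(\tx^k)|$, and $\dist(-\nabla f(\tx^k),\partial\q(\tx^k))$, whose expectations all tend to~$0$, pick $K$ so that each exceeds $\eps$ with probability at most $\rho/3$, and close with a union bound.

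The step I expect to be the main obstacle is the a~priori estimate placing $\tx^k$ into the enlarged set $\cS$: it must simultaneously combine the block-wise subproblem optimality, the full-gradient Lipschitz descent inequality for $f$, and the cumulative reduction of $F$ implied by~\eqref{F-reduct} and monotonicity of $\{F(x^{\ell(k)})\}$. Once this is in hand, the remaining pieces reduce routinely to the $\bE_{\xi_{k-1}}[\|\bd^k\|]\to 0$ bound and the Markov-plus-union-bound argument.
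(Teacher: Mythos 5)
Your proposal is correct and follows essentially the same route as the paper's proof: the same bound $\dist(-\nabla f(\tx^k),\partial\q(\tx^k))\le (c+L_f)\|\bd^k\|$ from the subproblem optimality conditions, the same estimate $F(\tx^k)-F(x^k)\le\frac{1}{2}(L_f-\underline\aalpha)\|\bd^k\|^2$ combined with $\|\bd^k\|^2\le 2n(F(x^0)-F^\star)/\sigma$ from \eqref{F-reduct} to place $\tx^k$ in $\cS$, and the same appeal to Lemma \ref{diff-randseqs} plus Markov's inequality. The only cosmetic differences are that you work block-wise before aggregating (equivalent by separability to the paper's use of \eqref{bdk}) and you finish with a union bound over three Markov estimates rather than one Markov bound on the maximum.
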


\begin{proof}
(i) We know from Theorem \ref{function-expect} (ii) that $\lim_{k\to\infty}\bE_{\xi_k}[\|d^k\|]=0$,  
which together with \eqref{bd-norm} implies $\lim_{k\to\infty}\bE_{\xi_{k-1}}[\|\bd^k\|] = 0$. 
Notice that $\bd^k$ is an optimal solution of problem \eqref{bdk}. By the first-order optimality condition (see, for example, Proposition 2.3.2 of \cite{Clarke90}) of \eqref{bdk} and $\tx^k = x^k+\bd^k$, one can have
\beq \label{opt-cond}
0 \in \nabla f(x^k) + \Theta_k\bd^k + \partial \q(\tx^k).
\eeq
In addition, it follows from \eqref{g-lip} that 
\[
\|\nabla f(\tx^k)-\nabla f(x^k)\| \ \le \  L_f \|\bd^k\|.
\]
Using this relation along with Lemma \ref{approx-lip} (ii) and \eqref{opt-cond}, we obtain that
\[
\dist(-\nabla f(\tx^k), \partial \q(\tx^k)) \ \le \ \left(c +L_f\right) \|\bd^k\|,
\]
which together with the first relation of \eqref{statement-1} implies that the second relation of 
\eqref{statement-1} also holds.

(ii) Let $x^*$ be an accumulation point of $\{x^k\}$. There exists a subsequence $\cK$ such that 
$\lim_{k \in \cK \to \infty} x^k = x^*$. Since $\bE_{\xi_{k-1}}[\|\bd^k\|] \to 0$, it follows that 
$\{\bd^k\}_{k \in \cK} \to 0$ almost surely. This together with the second relation of 
\eqref{statement-1} and outer semi-continuity of $\partial \Psi$ yields
\[
\dist(-\nabla f(x^*), \partial \q(x^*)) = \lim_{k \in \cK \to \infty} \dist(-\nabla f(\tx^k), \partial \q(\tx^k)) =0
\] 
almost surely. Hence, $x^*$ is a stationary point of problem \eqref{NLP} almost surely.

(iii) Recall that $\tx^k = x^k+\bd^k$.  It follows from \eqref{g-lip} that 
\[
f(\tx^k) \le  f(x^k) + \nabla f(x^k)^T \bd^k + \frac{1}{2}  L_f \|\bd^k\|^2.
\]
Using this relation and Lemma \ref{approx-lip} (ii), we have
\beqa
F(\tx^k) &\le&  f(x^k) + \nabla f(x^k)^T \bd^k + \frac{1}{2}  L_f \|\bd^k\|^2 + \q(x^k+\bd^k) \nn \\ 
&\le&  f(x^k)  + \nabla f(x^k)^T \bd^k + \frac{1}{2} (\bd^k)^T \Theta_k \bd^k + \q(x^k+\bd^k) +  \frac{1}{2}  \left(L_f -\underline\aalpha\right)\|\bd^k\|^2. \label{Fxbar}
\eeqa
In view of \eqref{bdk}, one has 
\[
\nabla f(x^k)^T \bd^k + \frac{1}{2} (\bd^k)^T \Theta_k \bd^k + \q(x^k+\bd^k) \le \q(x^k),
\]
which together with \eqref{Fxbar} yields
\[
F(\tx^k) \le  F(x^k) + \frac{1}{2}  (L_f -\underline\aalpha)\|\bd^k\|^2.
\]
Using this relation and the fact that $F(\tx^k) \ge F^*$ and $F(x^k) \le F(x^0)$, one 
can obtain that
\beq \label{diff-F}
|F(\tx^k)-F(x^k)| \le \max\left\{ \frac{1}{2}  \left|L_f -\underline\aalpha\right|\|\bd^k\|^2, F(x^0)-F^*\right\} \quad\quad \forall k.
\eeq
In addition, since $F^{l(k)} \le F(x^0)$ and $F(\tx^k) \ge F^*$, it follows from \eqref{F-reduct} that 
$\|\bd^{k,i}\|^2 \le 2(F(x^0)-F^*)/\sigma$. Hence, one has
\[
\|\bd^k\|^2=\sum^n_{i=1} \|\bd^{k,i}\|^2 \le 2n(F(x^0)-F^*)/\sigma\quad\quad \forall k.
\]
This inequality together with \eqref{diff-F} yields
\[
|F(\tx^k)-F(x^k)| \le \max\left\{\frac{n}{\sigma}\left|L_f -\underline\aalpha\right|, 1\right\} (F(x^0)-F^*) \quad\quad \forall k,
\]
and hence $\{|F(\tx^k)-F(x^k)|\}$ is bounded. Also, this inequality together with $F(x^k) \le F(x^0)$ 
and the definition of $\cS$ implies that $\tx^k$, $x^k \in \cS$ for all $k$. In addition, by 
statement (i), we know $\bE_{\xi_{k-1}}[\|x^k-\tx^k\|] \to 0$. In view of these facts and 
invoking Lemma \ref{diff-randseqs}, one has 
\beq \label{lim-diff-F}
\lim_{k \to \infty}\bE_{\xi_{k-1}}[|F(x^k)-F(\tx^k)|] \ = \ 0.
\eeq
Observe that 
\[
\ba{lcl}
0 &\le&  \max\left\{\|x^k - \tx^k\|, |F(x^k)-F(\tx^k)|, \dist(-\nabla f(\tx^k), \partial \q(\tx^k))\right\} \\ [6pt]
&\le & \|x^k - \tx^k\| + |F(x^k)-F(\tx^k)|+ \dist(-\nabla f(\tx^k), \partial \q(\tx^k)).
\ea
\]
Using these inequalities, \eqref{lim-diff-F} and statement (i), we see that
\[
\lim_{k \to\infty}\bE_{\xi_{k-1}} \left[\max\left\{\|x^k - \tx^k\|, |F(x^k)-F(\tx^k)|, \dist(-\nabla f(\tx^k), \partial \q(\tx^k))\right\} \right] \ = \ 0.
\]
The rest of statement (iii) follows from this relation and the Markov inequality.
\end{proof}

\subsection{Convergence rate analysis}


In this subsection we establish a sublinear rate of convergence of RNBPG 
in terms of the minimal expected squared norm of certain proximal gradients over the 
iterations.

\begin{theorem} \label{complexity}
Let $\bg^k = -\Theta_k \bd^k$,  $p_{\min}$, $\hg^k$ and $c$ be defined in \eqref{Lf-pmin}, 
\eqref{hg} and \eqref{c}, respectively, and $F^*$ the optimal value of \eqref{NLP}.  The 
following statements hold
\bi
\item[(i)] 
\[
\min\limits_{1 \le t \le k} \bE_{\xi_{t-1}}[\|\bg^t\|^2] \le  \frac{2c^2(F(x^0)-F^*)}
{\sigma p_{\min}}\cdot \frac{1}{\lfloor (k+1)/(M+1)\rfloor} \quad\quad \forall k \ge M.
\]
\item[(ii)] Assume further that $\Psi$ is convex. Then
\[
\min\limits_{1 \le t \le k} \bE_{\xi_{t-1}}[\|\hg^t\|^2] \le 
\frac{c^2(F(x^0)-F^*)}{2\sigma p_{\min}} 
 \left[1+\frac{1}{\underline\aalpha} + \sqrt{1-\frac{2}{c} + \frac{1}{\underline\aalpha^2}}\right]^2  \cdot \frac{1}{\lfloor (k+1)/(M+1)\rfloor} \quad\quad \forall k \ge M.
\]
\ei 
\end{theorem}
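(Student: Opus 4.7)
The plan is to derive part (i) from the nonmonotone descent inequality already established in the proof of Theorem~\ref{function-expect}, then immediately obtain part (ii) by plugging in the proximal-gradient bound \eqref{hdk-bdd}.

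For part (i), I would start from the chained descent inequality
\[
F(x^{\ell(k)}) \ \le \ F(x^{\ell(k-M-1)}) - \frac{\sigma}{2}\|d^{\ell(k)-1}\|^2 \quad\forall\,k\ge M+1,
\]
which is exactly \eqref{F-iter-s}. Setting $j=\lfloor (k+1)/(M+1)\rfloor$ and applying this at $k = M+1, 2(M+1),\ldots,j(M+1)$, the left-hand sides telescope through $\ell(j(M+1)),\ell((j-1)(M+1)),\ldots,\ell(0)=0$, and together with $F(x^{\ell(j(M+1))}) \ge F^\star$ and $F(x^{\ell(0)})=F(x^0)$ we obtain, after taking expectations,
\[
\sum_{i=1}^{j} \bE_{\xi_{i(M+1)-1}}\!\bigl[\|d^{\ell(i(M+1))-1}\|^2\bigr] \ \le \ \frac{2(F(x^0)-F^\star)}{\sigma}.
\]
So at least one of the $j$ indices $t_i := \ell(i(M+1))-1 \in \{0,1,\ldots,k\}$ satisfies $\bE[\|d^{t_i}\|^2] \le \frac{2(F(x^0)-F^\star)}{\sigma j}$.

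Next I convert this into the desired bound on $\bE[\|\bg^t\|^2]$. By Lemma~\ref{approx-lip}(ii), $\|\bg^t\|^2 = \|\Theta_t\bd^t\|^2 \le c^2\|\bd^t\|^2$, and by Lemma~\ref{exp-dk} (inequality \eqref{bd-sqr}), $\bE_{\xi_{t-1}}[\|\bd^t\|^2] \le p_{\min}^{-1}\bE_{\xi_t}[\|d^t\|^2]$. Chaining these two gives $\bE[\|\bg^t\|^2] \le \tfrac{c^2}{p_{\min}}\bE[\|d^t\|^2]$, so the minimum of $\bE[\|\bg^t\|^2]$ over $t\in\{1,\ldots,k\}$ is at most $\tfrac{2c^2(F(x^0)-F^\star)}{\sigma p_{\min} j}$, yielding (i).

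Part (ii) is then a one-line consequence: when $\Psi$ is convex, the lemma preceding this theorem gives $\|\hg^t\| \le \tfrac{c}{2}\bigl[1+\tfrac{1}{\underline\aalpha}+\sqrt{1-\tfrac{2}{c}+\tfrac{1}{\underline\aalpha^2}}\bigr]\|\bd^t\|$, so after squaring and taking expectations one can substitute the intermediate bound $\min_t \bE[\|\bd^t\|^2] \le \tfrac{2(F(x^0)-F^\star)}{\sigma p_{\min} j}$ that drops out of the reasoning above, producing exactly the claimed constant in (ii).

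The main obstacle is bookkeeping rather than mathematics: one has to pick the telescoping indices carefully so that (a) all the $t_i$ lie in $\{1,\ldots,k\}$ and (b) there are at least $\lfloor (k+1)/(M+1)\rfloor$ of them, which forces the choice of block size $M+1$ and is also why the statement is restricted to $k \ge M$. Handling the edge case $t_1 = 0$ (which can happen when $\ell(M+1)=1$) requires either extending the min to $t\ge 0$ or noting that $\bd^0$, $\bg^0$ and $\hg^0$ are well-defined objects and the descent inequality at $k=0$ still gives a usable bound; aside from this minor boundary check the argument is routine.
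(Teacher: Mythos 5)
Your proposal is correct and follows essentially the same route as the paper's proof: telescoping the nonmonotone descent inequality \eqref{F-iter-s}/\eqref{F-iter-1} over blocks of $M+1$ iterations with the indices $\ell(i(M+1))-1$, converting $\bE[\|d^t\|^2]$ to $\bE[\|\bg^t\|^2]$ via \eqref{bd-sqr} and Lemma \ref{approx-lip}(ii), and then invoking \eqref{hdk-bdd} for part (ii). The boundary issue you flag (the first telescoping index possibly equaling $0$) is present in the paper's argument as well, and your suggested fixes are adequate.
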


\begin{proof}
(i) Using $\bg^k = -\Theta_k \bd^k$, Lemma \ref{approx-lip} (ii), and \eqref{bd-sqr}, 
one can observe that 
\beq \label{dd-ineq} 
 \bE_{\xi_{k}}[\|d^k\|^2]  \ge   p_{\min} \bE_{\xi_{k-1}}[\|\bd^k\|^2] 
= p_{\min} \bE_{\xi_{k-1}}[\|\Theta^{-1}_k\bg^k\|^2]  \ge  \frac{p_{\min}} {c^2} \bE_{\xi_{k-1}}[\|\bg^k\|^2]. 
\eeq
Let $j(t) = l((M+1)t)-1$ and $\bj(t) = (M+1)t-1$ for all $t \ge 0$. One can see from \eqref{F-iter-1} that 
\[
\bE_{\xi_{\bj(t)}}[F(x^{j(t)+1})]  \ \le \  \bE_{\xi_{\bj(t-1)}}[F(x^{j(t-1)+1})] - \frac{\sigma}{2} \bE_{\xi_{\bj(t)}}[\|d^{j(t)}\|^2]\quad\quad \forall t \ge 1.
\]
Summing up the above inequality over $t=1, \ldots, s$, we have
\[
\bE_{\xi_{\bj(s)}}[F(x^{j(s)+1})]  \le  F(x^0) - \frac{\sigma}{2} \sum^s_{t=1} \bE_{\xi_{\bj(t)}}[\|d^{j(t)}\|^2] \le  F(x^0) - \frac{\sigma s}{2}\min\limits_{1 \le t \le s} \bE_{\xi_{\bj(t)}}[\|d^{j(t)}\|^2],
\]
which together with $\bE_{\xi_{\bj(s)}}[F(x^{j(s)+1})] \ge F^*$ implies that 
\beq \label{dnorm-bdd}
\min\limits_{1 \le t \le s} \bE_{\xi_{\bj(t)}}[\|d^{j(t)}\|^2] \le \frac{2(F(x^0)-F^*)}{\sigma s}. 
\eeq
Given any $k \ge M$, let $s_k=\lfloor (k+1)/(M+1)\rfloor$. Observe that 
\[
\bj(s_k) = (M+1)s_k-1 \le k.
\]
Using this relation and \eqref{dnorm-bdd}, we have
\[
\min\limits_{1 \le t \le k} \bE_{\xi_{t}}[\|d^t\|^2] \le \min\limits_{1 \le \tilde t \le s_k} \bE_{\xi_{\bj(\tilde t)}}[\|d^{j(\tilde t)}\|^2] \le \frac{2(F(x^0)-F^*)}{\sigma \lfloor (k+1)/(M+1)\rfloor} \quad\quad \forall k \ge M,
\] 
which together with \eqref{dd-ineq} implies that statement (i) holds.

(ii) It follows from  \eqref{bd-sqr} and \eqref{dnorm-bdd} that 
\[
\min\limits_{1 \le t \le s} \bE_{\xi_{\bj(t)-1}}[\|\bd^{j(t)}\|^2] \le \frac{2(F(x^0)-F^*)}
{\sigma s p_{\min}}. 
\]
Using this relation and a similar argument as above, one has 
\[
\min\limits_{1 \le t \le k} \bE_{\xi_{t-1}}[\|\bd^t\|^2] \le \min\limits_{1 \le \tilde t \le s_k} \bE_{\xi_{\bj(\tilde t)-1}}[\|\bd^{j(\tilde t)}\|^2] \le \frac{2(F(x^0)-F^*)}{\sigma p_{\min}\lfloor (k+1)/(M+1)\rfloor} \quad\quad \forall k \ge M.
\]
Statement (ii) immediately follows from this inequality and \eqref{hdk-bdd}.
\end{proof}

\section{Convergence analysis for structured convex problems}
\label{monotone}

In this section we study convergence of RNBPG for solving structured convex problem \eqref{NLP}. To this end, we assume throughout this section that $f$ and $\q$ are both convex functions.

The following result shows that $F(x^k)$ can be arbitrarily close to  the 
optimal value $F^*$ of \eqref{NLP} with high probability for sufficiently large $k$.

\begin{theorem} \label{convex-opt}
Let $\{x^k\}$ be generated by the RNBPG method, and let $F^*$ and $X^*$ the optimal value and 
the set of optimal solutions of \eqref{NLP}, respectively. Suppose that $f$ and $\q$ are convex 
functions and $F$ is uniformly continuous in $\cS$, where $\cS$ is defined in
\eqref{cS}. Assume that there exists a subsequence $\cK$ such that $\{\bE_{\xi_{k-1}}[\dist(x^k,X^*)]\}_{\cK}$ is bounded. Then there hold:
\bi
\item[(i)]
\[
\lim\limits_{k\to\infty} \bE_{\xi_{k-1}} [F(x^k)] = F^*.
\]
\item[(ii)] For any $\epsilon>0$ and $\rho \in (0,1)$, there exists $K$ such that  for all $k \ge K$, 
\[
\P\left(F(x^k)-F^* \ \le \ \eps \right) \ \ge  \ 1-\rho.  
\]
\ei
\end{theorem}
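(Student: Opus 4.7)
The plan is to first show that the almost-sure limit $F^*_{\xi_\infty}$ of $F(x^k)$ supplied by Theorem \ref{function-expect}(i) equals $F^*$ almost surely; statement (i) will then follow immediately from Theorem \ref{function-expect}(ii), and statement (ii) from Markov's inequality applied to the nonnegative random variable $F(x^k) - F^*$.

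The core estimate I would establish is a convex descent inequality of the form
\[
F(\tx^k) - F^* \;\le\; \frac{L_f}{2}\|\bd^k\|^2 + c\,\|\bd^k\|\,\dist(x^k, X^*).
\]
The first-order optimality condition for \eqref{bdk} gives $-\nabla f(x^k) - \Theta_k\bd^k \in \partial\q(\tx^k)$. Combining this with the subgradient inequality for the convex $\q$ at $x^* = \arg\min_{y \in X^*}\|y-x^k\|$ and with convexity of $f$ yields
\[
F^* \;\ge\; f(x^k) + \q(\tx^k) + \nabla f(x^k)^T\bd^k + (\Theta_k\bd^k)^T\bd^k - (\Theta_k\bd^k)^T(x^* - x^k).
\]
Upper-bounding $F(\tx^k)$ via the Lipschitz-gradient inequality $f(\tx^k) \le f(x^k) + \nabla f(x^k)^T\bd^k + \tfrac{L_f}{2}\|\bd^k\|^2$ implied by \eqref{g-lip}, subtracting, dropping the nonpositive term $-(\Theta_k\bd^k)^T\bd^k \le 0$, and using $\|\Theta_k\bd^k\| \le c\|\bd^k\|$ from Lemma \ref{approx-lip}(ii) with Cauchy--Schwarz produces the claimed inequality.

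Next, I would exploit this bound along a random sub-subsequence. Because Theorem \ref{stationary-pt}(i) gives $\bE_{\xi_{k-1}}[\|\bd^k\|]\to 0$, I can extract a deterministic subsequence $\cK'\subseteq\cK$ on which $\|\bd^k\|\to 0$ almost surely. The hypothesis that $\{\bE_{\xi_{k-1}}[\dist(x^k, X^*)]\}_{\cK}$ is bounded, together with Fatou's lemma along $\cK'$, yields $\liminf_{k\in\cK'}\dist(x^k, X^*)<\infty$ almost surely. Thus for almost every sample path there is a further (random) sub-subsequence $\{k_j\}$ on which $\dist(x^{k_j}, X^*)$ stays bounded and $\|\bd^{k_j}\|\to 0$; the displayed inequality then forces $F(\tx^{k_j})-F^*\to 0$. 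Since the proof of Theorem \ref{stationary-pt}(iii) already places $x^k,\tx^k\in\cS$, uniform continuity of $F$ on $\cS$ together with $\|\bd^{k_j}\|\to 0$ gives $|F(x^{k_j})-F(\tx^{k_j})|\to 0$, so $F(x^{k_j})\to F^*$ almost surely. But Theorem \ref{function-expect}(i) ensures $F(x^k)\to F^*_{\xi_\infty}$ almost surely for the full sequence, so $F^*_{\xi_\infty} = F^*$ almost surely.

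Taking expectations and invoking Theorem \ref{function-expect}(ii) yields $\lim_k\bE_{\xi_{k-1}}[F(x^k)] = \bE_{\xi_\infty}[F^*_{\xi_\infty}] = F^*$, which is (i). For (ii), Markov's inequality gives
\[
\P\bigl(F(x^k)-F^* > \epsilon\bigr) \;\le\; \frac{\bE_{\xi_{k-1}}[F(x^k)-F^*]}{\epsilon},
\]
and the right-hand side tends to zero by (i), producing the required high-probability bound for all sufficiently large $k$. The most delicate step is the sample-pathwise extraction of the random sub-subsequence $\{k_j\}$ on which both $\|\bd^{k_j}\|\to 0$ and $\dist(x^{k_j}, X^*)$ is bounded; this should be routine measure-theoretically but must be worded carefully so that the subsequent applications of the descent inequality and of uniform continuity of $F$ on $\cS$ both go through along the chosen indices.
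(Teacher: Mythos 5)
Your proposal is correct, but it reaches the conclusion by a genuinely different route than the paper. The paper works entirely in expectation: it takes a subgradient $s^k\in\partial F(\tx^k)$ whose norm equals $\dist(-\nabla f(\tx^k),\partial\q(\tx^k))$, applies the subgradient inequality $F(\tx^k)\le F(x^k_*)+(s^k)^T(\tx^k-x^k_*)$, and kills the cross term via Cauchy--Schwarz together with the boundedness of $\{\bE_{\xi_{k-1}}[\dist(x^k,X^*)]\}_{\cK}$, concluding $\tF^*\le F^*$ along $\cK$ and hence $\tF^*=F^*$. You instead derive a three-point inequality $F(\tx^k)-F^*\le \frac{L_f}{2}\|\bd^k\|^2+c\|\bd^k\|\dist(x^k,X^*)$ directly from the optimality condition of \eqref{bdk} (your derivation checks out: the subgradient inequality for $\q$ at $\tx^k$ plus convexity of $f$ gives the lower bound on $F^*$, and dropping $-\|\bd^k\|^2_{\Theta_k}\le 0$ and bounding $\|\Theta_k\bd^k\|\le c\|\bd^k\|$ gives the claim), and then argue pathwise: extract a subsequence of $\cK$ along which $\|\bd^k\|\to 0$ a.s., use Fatou to get $\liminf\dist(x^k,X^*)<\infty$ a.s., and identify the a.s.\ limit $F^*_{\xi_\infty}$ from Theorem \ref{function-expect}(i) with $F^*$, finally passing to expectations via Theorem \ref{function-expect}(ii). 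Your route is longer but buys two things: it yields the stronger byproduct $F^*_{\xi_\infty}=F^*$ almost surely (the pathwise limit of the objective values is the optimal value, not just the expected limit), and it avoids the paper's Cauchy--Schwarz step, which as written requires control of the second moments $\bE_{\xi_{k-1}}[\|s^k\|^2]$ and $\bE_{\xi_{k-1}}[\dist(x^k,X^*)^2]$ while only first moments are established or assumed; your first-moment, pathwise argument sidesteps that issue entirely. The one step you flag as delicate --- the path-dependent sub-subsequence extraction --- is indeed routine and causes no trouble, since the identification $F^*_{\xi_\infty}(\omega)=F^*$ only requires, for a.e.\ fixed $\omega$, some subsequence along which $F(x^{k_j})(\omega)\to F^*$. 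Part (ii) via Markov's inequality coincides with the paper.
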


\begin{proof}
(i) Let $\bd^k$ be defined in\eqref{bdx}. Using the assumption that $F$ is uniformly continuous in 
$\cS$ and Theorem \ref{stationary-pt}, one has
\beqa 
& & \lim\limits_{k\to\infty}\bE_{\xi_{k-1}}[\|\bd^k\|] = 0,  \quad\quad
\lim\limits_{k\to\infty} \bE_{\xi_{k-1}} [\|s^k\|] \ = \ 0, \label{subgrad} \\
& & \lim\limits_{k\to\infty}\bE_{\xi_{k-1}}[F(x^k+\bd^k)] = \lim\limits_{k\to\infty}\bE_{\xi_{k-1}}[F(x^k)] = \tF^* \label{limit-Fbd}
\eeqa
for some $s^k \in \partial F(x^k+\bd^k)$ and $\tF^* \in \Re$.  Let $x^k_*$ be the projection of 
$x^k$ onto $X^*$. By the convexity of $F$, we have
\beq \label{upp-bdd}
F(x^k+\bd^k) \ \le \ F(x^k_*) + (s^k)^T(x^k+\bd^k-x^k_*).
\eeq
One can observe that
\[
\ba{lcl}
|\bE_{\xi_{k-1}}[(s^k)^T(x^k+\bd^k-x^k_*)]|  &\le & \bE_{\xi_{k-1}}[|(s^k)^T(x^k+\bd^k-x^k_*)|] \\ [10pt]
&\le & \bE_{\xi_{k-1}} [\|s^k\| \|(x^k+\bd^k-x^k_*)\|] \\ [10pt]
&\le & \sqrt{ \bE_{\xi_{k-1}} [\|s^k\|^2]}  \sqrt{\bE_{\xi_{k-1}} [\|(x^k+\bd^k-x^k_*)\|^2]} \\ [10pt]
&\le & \sqrt{ \bE_{\xi_{k-1}} [\|s^k\|^2]}  \sqrt{2\bE_{\xi_{k-1}} [\left(\dist(x^k,X^*)\right)^2+
\|\bd^k\|^2]},
\ea
\]
which, together with \eqref{subgrad} and the assumption that $\{\bE_{\xi_{k-1}}[\dist(x^k,X^*)]\}_{\cK}$ is bounded, implies that
\[
\lim\limits_{k\in \cK \to\infty}\bE_{\xi_{k-1}}[(s^k)^T(x^k+\bd^k-x^k_*)] = 0.
\]
Using this relation, \eqref{limit-Fbd} and \eqref{upp-bdd}, we obtain that
\[
\ba{lcl}
\tF^* &=& \lim\limits_{k\to\infty}\bE_{\xi_{k-1}}[F(x^k)] = \lim\limits_{k\to\infty}\bE_{\xi_{k-1}}[F(x^k+\bd^k)] \\ [10pt]
& = &\lim\limits_{k \in \cK \to\infty}\bE_{\xi_{k-1}}[F(x^k+\bd^k)] 
\le \lim\limits_{k \in \cK \to\infty}\bE_{\xi_{k-1}}[F(x^k_*)]  \ = \ F^*,
\ea
\]
which together with $\tF^* \ge F^*$ yields $\tF^* = F^*$. Statement (i) follows from 
this relation and \eqref{limit-Fbd}.
 
(ii) Statement (ii) immediately follows from statement (i), the Markov inequality, and the fact 
$F(x^k)\ \ge \ F^*$.
\end{proof}

\gap

 In the rest of this section we study the rate of convergence of a monotone version of RNBPG, i.e., 
$M=0$, or equivalently, \eqref{reduct} is replaced by 
\beq \label{reduct1}
F(x^k + d^k) \ \le \  F(x^k) - \frac{\sigma}{2}  \|d^k\|^2.
\eeq

The following lemma will be subsequently used to establish a sublinear rate of convergence of RNBPG 
with $M=0$. 

\begin{lemma} \label{recur-seq}
Suppose that a nonnegative sequence $\{\Delta_k\}$  satisfies 
\beq \label{Delta-t}
\Delta_k \ \le \ \Delta_{k-1} - \alpha
\Delta^2_k \quad\quad \forall k \ge 1
\eeq
for some $\alpha>0$. Then 
\[
\Delta_k \le \frac{\max\{2/\alpha, \Delta_0\}}{k+1} \quad\quad \forall k \ge 0.
\]
\end{lemma}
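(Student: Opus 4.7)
The plan is to prove the bound $\Delta_k \le C/(k+1)$ by induction on $k$, where $C := \max\{2/\alpha, \Delta_0\}$. The base case $k=0$ is immediate from $\Delta_0 \le C$. For the inductive step, I would assume $\Delta_{k-1} \le C/k$ and aim to derive $\Delta_k \le C/(k+1)$.

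First I would observe that \eqref{Delta-t} together with nonnegativity of $\Delta_k$ implies $\Delta_k \le \Delta_{k-1}$, so the sequence is nonincreasing. The case $\Delta_k = 0$ gives the desired bound trivially, so I may assume $\Delta_k > 0$. The key step is an argument by contradiction: suppose $\Delta_k > C/(k+1)$. Then \eqref{Delta-t} yields
\[
\Delta_{k-1} \;\ge\; \Delta_k + \alpha\Delta_k^2 \;>\; \frac{C}{k+1} + \alpha\frac{C^2}{(k+1)^2} \;=\; \frac{C}{k+1}\left(1 + \frac{\alpha C}{k+1}\right).
\]
Now I would exploit the defining property $C \ge 2/\alpha$, i.e.\ $\alpha C \ge 2 \ge (k+1)/k$ for every $k \ge 1$, to conclude that the right-hand side is at least $C/k$, contradicting the inductive hypothesis. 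This forces $\Delta_k \le C/(k+1)$ and completes the induction.

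I do not expect any serious obstacle here — the argument is a fairly standard potential-style induction for sequences satisfying quadratic recurrences of this type. The only subtle point is the choice of the constant $C$: taking $C = \max\{2/\alpha,\Delta_0\}$ rather than just $\Delta_0$ is precisely what is needed to make the inequality $\alpha C \ge (k+1)/k$ hold uniformly in $k \ge 1$, and simultaneously to absorb the base case. An alternative route would be to divide the recursion by $\Delta_k\Delta_{k-1}$ to obtain $1/\Delta_k - 1/\Delta_{k-1} \ge \alpha\Delta_k/\Delta_{k-1}$ and sum telescopically, but the ratio $\Delta_k/\Delta_{k-1}$ on the right is inconvenient to control, so the direct induction via contradiction above is cleaner.
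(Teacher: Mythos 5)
Your proof is correct, and it reaches the paper's bound by a genuinely different (and arguably cleaner) route. The paper passes to reciprocals $\bar\Delta_k = 1/\Delta_k$, rewrites \eqref{Delta-t} as the quadratic inequality $\bar\Delta_k^2 - \bar\Delta_{k-1}\bar\Delta_k - \alpha\bar\Delta_{k-1} \ge 0$, solves it to get $\bar\Delta_k \ge \tfrac{1}{2}\bigl(\bar\Delta_{k-1} + \sqrt{\bar\Delta_{k-1}^2 + 4\alpha\bar\Delta_{k-1}}\,\bigr)$, and then proves $\bar\Delta_k \ge \beta(k+1)$ with $\beta = \min\{\alpha/2, 1/\Delta_0\}$ by induction; because this requires dividing by $\Delta_k$, the paper must treat the case where some $\Delta_{\tilde k}$ vanishes separately (showing the sequence is then identically zero from that index on). You instead run the induction directly on $\Delta_k$: assuming $\Delta_{k-1} \le C/k$ and supposing $\Delta_k > C/(k+1)$, the monotonicity of $t \mapsto t + \alpha t^2$ gives $\Delta_{k-1} \ge \Delta_k + \alpha\Delta_k^2 > \frac{C}{k+1}\bigl(1 + \frac{\alpha C}{k+1}\bigr) \ge \frac{C}{k}$, where the last step is exactly $\alpha C \ge (k+1)/k$, guaranteed by $\alpha C \ge 2$ for all $k \ge 1$ --- a contradiction. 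The key inequality $\alpha C \ge 2 \ge (k+1)/k$ you isolate is the same arithmetic fact that drives the paper's induction on $\bar\Delta_k$ (there it appears as $\beta \le \alpha/2$), so the two proofs are computationally equivalent; what your version buys is the avoidance of the reciprocal transform and hence of the separate degenerate case, at no cost in length. Your remark that the naive telescoping of $1/\Delta_k - 1/\Delta_{k-1}$ is awkward is also apt: the paper's fix for that awkwardness is precisely the quadratic-formula step.
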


\begin{proof}
We divide the proof into two cases.  

Case (i): Suppose $\Delta_k >0$ for all $k \ge 0$. Let $\bDelta_k=1/\Delta_k$. It follows from \eqref{Delta-t} that 
\[
\bDelta^2_k -\bDelta_{k-1} \bDelta_k - \alpha \bDelta_{k-1} \ge 0 \quad\quad \forall k \ge 1,
\]
which together with $\bDelta_k > 0$ implies that 
\beq \label{bDelta-ineq1}
\bDelta_k \ge \frac{\bDelta_{k-1}+\sqrt{\bDelta^2_{k-1}+4\alpha \bDelta_{k-1}}}{2}.
\eeq
We next show by induction that 
\beq \label{bDelta-ineq2}
\bDelta_k \ge \beta (k+1) \quad\quad \forall k \ge 0,
\eeq
where $\beta = \min\left\{\alpha/2, \bDelta_0\right\}$. 
By the definition of $\beta$, one can see that \eqref{bDelta-ineq2} holds for $k=0$. Suppose it 
holds for some $k \ge 0$. We now need to show \eqref{bDelta-ineq2} also holds for $k+1$. Indeed, 
since $\beta \le \alpha/2$, we have 
\[
\alpha(k+1) \ge \alpha \left(k/2+1\right) = \alpha (k+2)/2 \ge \beta(k+2). 
\]
which yields
\[
4\alpha\beta(k+1) \ge \beta^2(4k+8) = [2\beta(k+2)-\beta(k+1)]^2 - \beta^2(k+1)^2.
\]
It follows that 
\[
\sqrt{\beta^2(k+1)^2+ 4\alpha\beta(k+1)} \ge 2\beta(k+2)-\beta(k+1),
\]
which is equivalent to
\[
\beta(k+1) + \sqrt{\beta^2(k+1)^2+ 4\alpha\beta(k+1)} \ge 2\beta(k+2).
\]
Using this inequality, \eqref{bDelta-ineq1} and the induction hypothesis $\bDelta_k \ge \beta (k+1)$, 
we obtain that
\[
\bDelta_{k+1} \ \ge \ \frac{\bDelta_k+\sqrt{\bDelta^2_k+4\alpha \bDelta_k}}{2} \ \ge \ 
 \frac{\beta(k+1) + \sqrt{\beta^2(k+1)^2+ 4\alpha\beta(k+1)}}{2} \ \ge \ \beta(k+2),
\]
namely, \eqref{bDelta-ineq2} holds for $k+1$. Hence, the induction is completed and \eqref{bDelta-ineq2} holds for all $k \ge 0$. The conclusion of this lemma follows from \eqref{bDelta-ineq2} and the definitions of $\bDelta_k$ and $\beta$.  

Case (ii) Suppose there exists some $\tilde k$ such that $\Delta_{\tilde k}=0$. Let $K$ be the smallest 
of such integers. Since $\Delta_k \ge 0$, it follows from \eqref{Delta-t} that $\Delta_k=0$ for all 
$k \ge K$ and $\Delta_k>0$ for every $0 \le k <K$. Clearly, the conclusion of this lemma holds for 
$k \ge K$. And it also holds for $0 \le k <K$ due to a similar argument as for Case (i).
\end{proof}

\gap

We next establish a sublinear rate of convergence on the expected objective values for the RNBPG 
method with $M=0$ when applied to problem \eqref{NLP}, where $f$ and $\psi$ are assumed 
to be convex. Before proceeding, we define the following quantities
\beqa
& & r = \max\limits_x \left\{\dist(x,X^*): x \in \Omega(x^0)\right\}, \label{r} \\ [5pt]
& & q =  \max\limits_x \left\{\|\nabla f(x)\|: x \in \Omega(x^0)\right\}, \label{q}
\eeqa
where $X^*$ denotes the set of optimal solutions of \eqref{NLP} and $\Omega(x^0)$ is defined in 
\eqref{omega}.

\begin{theorem} \label{sublinear}
Let $c, r, q$ be defined in \eqref{c}, \eqref{r}, \eqref{q}, respectively. Assume that $r$ and $q$ are finite. Suppose that $\Psi$ is $L_{\Psi}$-Lipschitz continuous in $\dom(\Psi)$, namely, 
\beq \label{L-Psi}
|\Psi(x)-\Psi(y)| \le L_{\Psi} \|x-y\| \quad\quad x, y \in \dom(\Psi)
\eeq
for some $L_{\Psi} >0$. Let $\{x^k\}$ be generated by RNBPG with $M=0$. Then
\[
\bE_{\xi_{k-1}}[F(x^k)]-F^*   \ \le \ \frac{\max\{2/\alpha, F(x^0)-F^*\}}{k+1} \quad\quad \forall k \ge 0,
\]
where 
\beq \label{alpha}
\alpha = \frac{\sigma p^2_{\min}}{2(L_{\Psi}+q+cr)^2}.  
\eeq
\end{theorem}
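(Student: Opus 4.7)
The plan is to fit $\Delta_k \defeq \bE_{\xi_{k-1}}[F(x^k)] - F^*$ into the framework of Lemma~\ref{recur-seq}. Since $\Delta_0 = F(x^0)-F^*$, once I verify
\[
\Delta_k \ \le \ \Delta_{k-1} - \alpha \Delta_k^2 \qquad \forall k\ge 1
\]
with $\alpha$ as stated, Lemma~\ref{recur-seq} immediately delivers the claimed bound. For the descent side, the monotone line search~\eqref{reduct1} gives $F(x^k)\le F(x^{k-1}) - \frac{\sigma}{2}\|d^{k-1}\|^2$ pathwise; taking $\bE_{\xi_{k-1}}$ and using that $F(x^{k-1})$ is $\xi_{k-2}$-measurable yields
\[
\Delta_k \ \le \ \Delta_{k-1} - \frac{\sigma}{2}\, \bE_{\xi_{k-1}}\bigl[\|d^{k-1}\|^2\bigr].
\]

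The crux is the pathwise bound
\[
F(x^{k-1}) - F^* \ \le \ (L_\Psi + q + cr)\, \|\bd^{k-1}\|.
\]
To derive it, I let $x^*$ be the projection of $x^{k-1}$ onto $X^*$; monotonicity of $\{F(x^k)\}$ puts $x^{k-1}\in\Omega(x^0)$, so $\|\nabla f(x^{k-1})\|\le q$ by~\eqref{q} and $\|x^{k-1}-x^*\|\le r$ by~\eqref{r}. By~\eqref{opt-cond}, $\eta \defeq -\nabla f(x^{k-1}) - \Theta_{k-1}\bd^{k-1}$ lies in $\partial \Psi(\tx^{k-1})$. Adding the convexity inequality for $f$ between $x^{k-1}$ and $x^*$ to the subgradient inequality $\Psi(x^*)\ge \Psi(\tx^{k-1})+\eta^T(x^*-\tx^{k-1})$, and using Lipschitz continuity~\eqref{L-Psi} in the form $\Psi(\tx^{k-1})\ge \Psi(x^{k-1})-L_\Psi\|\bd^{k-1}\|$, then substituting the explicit form of $\eta$, I express $F(x^{k-1})-F^*$ as a sum of four terms. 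These are controlled one-by-one: the Lipschitz remainder contributes $L_\Psi\|\bd^{k-1}\|$, the cross term $-\nabla f(x^{k-1})^T\bd^{k-1}$ contributes $q\|\bd^{k-1}\|$, the term $(\bd^{k-1})^T\Theta_{k-1}(x^*-x^{k-1})$ contributes $cr\|\bd^{k-1}\|$ by Cauchy--Schwarz together with Lemma~\ref{approx-lip}(ii), and the remaining $-(\bd^{k-1})^T\Theta_{k-1}\bd^{k-1}$ is nonpositive and is simply discarded.

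To close the recursion, monotonicity $F(x^k)\le F(x^{k-1})$ yields, after taking expectations, $\Delta_k \le (L_\Psi+q+cr)\,\bE_{\xi_{k-2}}[\|\bd^{k-1}\|]$. Combining Lemma~\ref{exp-dk}~\eqref{bd-norm} at index $k-1$, namely $\bE_{\xi_{k-2}}[\|\bd^{k-1}\|] \le \bE_{\xi_{k-1}}[\|d^{k-1}\|]/p_{\min}$, with Jensen's inequality $\bE_{\xi_{k-1}}[\|d^{k-1}\|] \le \bigl(\bE_{\xi_{k-1}}[\|d^{k-1}\|^2]\bigr)^{1/2}$, I deduce
\[
\bE_{\xi_{k-1}}\bigl[\|d^{k-1}\|^2\bigr]\ \ge\ \frac{p_{\min}^2}{(L_\Psi+q+cr)^{2}}\,\Delta_k^2.
\]
Inserting this lower bound into the descent inequality produces exactly $\Delta_k\le \Delta_{k-1}-\alpha\Delta_k^2$, and Lemma~\ref{recur-seq} finishes the proof.

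The main obstacle is the pathwise estimate. Testing the variational characterization~\eqref{bdk} directly with the candidate direction $d=x^*-x^{k-1}$ leaves an unwanted $\frac{1}{2}cr^2$ constant from the $d^T\Theta_{k-1}d$ term, which prevents a clean linear-in-$\|\bd^{k-1}\|$ bound. Working instead from the subgradient characterization~\eqref{opt-cond} and using the Lipschitz continuity of $\Psi$ to transport $\Psi(\tx^{k-1})$ back to $\Psi(x^{k-1})$ is what eliminates the constant and produces the required estimate.
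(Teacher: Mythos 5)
Your proof is correct and follows essentially the same route as the paper: the paper derives the identical pathwise bound $F(x^k)-F^*\le (L_\Psi+q+cr)\|\bd^k\|$ by testing the variational inequality for \eqref{bdk} at the projection $x^k_*$ (its \eqref{bxk-opt} is exactly your subgradient inequality with $\eta$ substituted), splits off the same four terms with the same bounds (discarding the nonpositive $-(\bd^k)^T\Theta_k\bd^k$), and then combines \eqref{bd-norm}, Jensen, the monotone descent \eqref{reduct1}, and Lemma \ref{recur-seq} exactly as you do. The only difference is the cosmetic choice of working from the subdifferential inclusion \eqref{opt-cond} rather than the equivalent variational form, so there is nothing substantive to distinguish the two arguments.
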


\begin{proof}
 Let $\bx^k$ be defined in \eqref{bdx}. For each $x^k$, let $x^k_*\in X^*$ such that $\|x^k-x^k_*\|=\dist(x^k,X^*)$. Due to $x^k \in  \Omega(x^0)$ and \eqref{r}, we know that $\|x^k-x^k_*\| \le r$. 
By the definition of $\bx^{k+1}$ and \eqref{bdk}, one can observe that 
\beq \label{bxk-opt}
[\nabla f(x^{k}) +\Theta_k(\bx^{k+1}-x^{k})]^T (\bx^{k+1}-x^k_*) + \Psi(\bx^{k+1}) - \Psi(x^k_*) \le 0.
\eeq
Using this inequality, \eqref{q}, and \eqref{L-Psi}, we have 
\[
\ba{l}
 F(x^{k}) - F^* = f(x^{k})-f(x^k_*) + \Psi(x^{k})-\Psi(\bx^{k+1})+\Psi(\bx^{k+1})-\Psi(x^k_*) \\ [8pt]
\le \nabla f(x^{k})^T(x^{k}-x^k_*) + L_{\Psi}\|x^{k}-\bx^{k+1}\| + \Psi(\bx^{k+1}) - \Psi(x^k_*) \\ [8pt]
=\nabla f(x^{k})^T(x^{k}-\bx^{k+1}) +  \nabla f(x^{k})^T(\bx^{k+1}-x^k_*) 
+L_{\Psi}\|x^{k}-\bx^{k+1}\| + \Psi(\bx^{k+1}) - \Psi(x^k_*) \\ [8pt]
\le( L_{\Psi}+q) \|x^{k}-\bx^{k+1}\| +(x^{k}-\bx^{k+1})^T \Theta_k(\bx^{k+1}-x^k_*) \\ [8pt]
\ \ \ + \underbrace{[\nabla f(x^{k}) +\Theta_k(\bx^{k+1}-x^{k})]^T (\bx^{k+1}-x^k_*)  
+ \Psi(\bx^{k+1}) - \Psi(x^k_*)}_{\le 0} \\ [22pt]
\le (L_{\Psi}+q) \|x^{k}-\bx^{k+1}\| + (x^{k}-\bx^{k+1})^T\Theta_k (\bx^{k+1}-x^k_*) \\ [8pt]
\le (L_{\Psi}+q) \|x^{k}-\bx^{k+1}\| + \underbrace{(x^{k}-\bx^{k+1})^T\Theta_k(\bx^{k+1}-x^{k})}_{\le 0} 
+ (x^{k}-\bx^{k+1})^T\Theta_k(x^{k}-x^k_*) \\ [22pt]
\le (L_{\Psi}+q) \|x^{k}-\bx^{k+1}\| + (x^{k}-\bx^{k+1})^T\Theta_k(x^{k}-x^k_*) \\ [8pt]
\le (L_{\Psi}+q) \|x^{k}-\bx^{k+1}\| + \|\Theta_k\| \|x^{k}-\bx^{k+1}\| \|x^{k}-x^k_*\| \\ [8pt] 
\le (L_{\Psi}+q+cr) \|x^{k}-\bx^{k+1}\| = (L_{\Psi}+q+cr) \|\bd^{k}\|,
\ea
\]
where the first inequality follows from convexity of $f$ and \eqref{L-Psi}, the second inequality is due to 
\eqref{q}, the third inequality follows from \eqref{bxk-opt}, and the last inequality is due to $\|x^k-x^k_*\| \le r$. The preceding inequality, \eqref{bd-norm} and  the fact $F(x^{k+1}) \le F(x^k)$ yield
\[
\bE_{\xi_k}[F(x^{k+1}]-F^* \le \bE_{\xi_{k-1}}[F(x^k)]-F^* \le (L_{\Psi}+q+cr) \bE_{\xi_{k-1}}[\|\bd^{k}\|] \le \frac{L_{\Psi}+q+cr}{p_{\min}} \bE_{\xi_{k-1}}[\|d^{k}\|].
\]
In addition, using $\left(\bE_{\xi_{k-1}}[\|d^{k}\|]\right)^2 \le \bE_{\xi_{k-1}}[\|d^{k}\|^2]$ and \eqref{reduct1}, one has
\[
\bE_{\xi_k}[F(x^{k+1})] \ \le \ \bE_{\xi_{k-1}}[F(x^k)] -  \frac{\sigma}{2} \bE_{\xi_{k-1}}[\|d^{k}\|^2] 
\ \le \ \bE_{\xi_{k-1}}[F(x^k)] -  \frac{\sigma}{2} \left(\bE_{\xi_{k-1}}[\|d^{k}\|]\right)^2.
\]
Let $\Delta_{k} = \bE_{\xi_{k-1}}[F(x^k)]-F^*$. Combining the preceding two inequalities, we obtain 
that
\[
\Delta_{k+1} \ \le \ \Delta_{k} - \alpha
\Delta^2_{k+1} \quad\quad \forall k \ge 0,
\]
where $\alpha$ is defined in \eqref{alpha}. Notice that $\Delta_0=F(x^0)-F^*$. Using this relation, 
the definition of $\Delta_k$, and Lemma \ref{recur-seq}, one can see that the conclusion of this 
theorem holds.
\end{proof}

\gap

The next result shows that under an error bound assumption the RNBPG method with $M=0$ is globally linearly convergent in terms of the expected objective values. 

%

\begin{theorem} 
Let $\{x^k\}$ be generated by RNBPG. Suppose that there exists $\tau >0$ such that 
\beq \label{err-bdd}
\dist(x^k,X^*) \le \tau \|\hg^k\| \quad\quad \forall k \ge 0,
\eeq
where $\hg^k$ is given in \eqref{hg} and $X^*$ denotes the set of optimal solutions 
of \eqref{NLP}. Then there holds
\[
\bE_{\xi_k}[F(x^k)] - F^* \le \left[\frac{2\varpi+(1-p_{\min}) \sigma}{2\varpi+\sigma}\right]^k (F(x^0)-F^*) \quad\quad \forall k \ge 0,
\]
where 
\[
\varpi =  \frac{(c+L_f) \tau^2 c^2}{8}\left[1+\frac{1}{\underline\aalpha} + \sqrt{1-\frac{2}{c} + \frac{1}{\underline\aalpha^2}}\ \right]^2 + \frac{L_{\max} - \underline \theta}{2}.
\]
\end{theorem}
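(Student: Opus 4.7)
The strategy is to establish a one-step contraction in conditional expectation,
\[
  \bE_{i_k}[F(x^{k+1})] - F^* \;\le\; \Bigl(1 - \tfrac{\sigma p_{\min}}{2\varpi + \sigma}\Bigr)\bigl(F(x^k) - F^*\bigr),
\]
and then take total expectation and iterate. Since $1 - \tfrac{\sigma p_{\min}}{2\varpi+\sigma} = \tfrac{2\varpi + (1-p_{\min})\sigma}{2\varpi+\sigma}$, applying this recursion $k$ times from $\bE_{\xi_{-1}}[F(x^0)] = F(x^0)$ yields the stated geometric rate. The contraction rests on two ingredients: first, the monotone descent condition \eqref{reduct1} combined with Lemma \ref{exp-dk} (specifically \eqref{bd-sqr}) gives
\[
  \bE_{i_k}[F(x^{k+1})] \;\le\; F(x^k) - \tfrac{\sigma p_{\min}}{2}\|\bd^k\|^2;
\]
second, and more substantively, the error-bound hypothesis yields
\[
  F(x^k) - F^* \;\le\; \bigl(\varpi + \tfrac{\sigma}{2}\bigr)\|\bd^k\|^2,
\]
so that $\|\bd^k\|^2 \ge (F(x^k)-F^*)/(\varpi+\sigma/2)$ can be substituted into the descent bound to produce the announced contraction.

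To prove the suboptimality bound, let $x^k_*$ be the projection of $x^k$ onto $X^*$ and $\tx^k := x^k + \bd^k$. Combining \eqref{err-bdd} and \eqref{hdk-bdd} gives $\|x^k - x^k_*\| \le \tau K\, \|\bd^k\|$ with $K := \tfrac{c}{2}[1+1/\underline\aalpha+\sqrt{1-2/c+1/\underline\aalpha^2}]$. Next, the block-wise optimality of $\bd^k$ in \eqref{subprob-soln} (summed over blocks, tested against $y = x^k_* - x^k$) yields
\[
  \nabla f(x^k)^T\bd^k + \tfrac{1}{2}\|\bd^k\|^2_{\Theta_k} + \Psi(\tx^k) \;\le\; \nabla f(x^k)^T(x^k_* - x^k) + \tfrac{1}{2}\|x^k - x^k_*\|^2_{\Theta_k} + \Psi(x^k_*).
\]
Combining this with convexity of $f$ in the form $f(\tx^k) - f(x^k_*) \le \nabla f(\tx^k)^T(\tx^k - x^k_*)$, the Lipschitz estimate \eqref{g-lip}, and the spectral bounds $\underline\aalpha I \preceq \Theta_k \preceq c I$ from Lemma \ref{approx-lip}(ii), a judicious Young's inequality applied to the cross term $\|\bd^k\|\,\|x^k - x^k_*\|$ allows the $\|x^k - x^k_*\|^2$ piece to collect with coefficient $(L_f + c)/2$; the error-bound estimate then converts it into the $\tfrac{(L_f + c)(\tau K)^2}{2}\|\bd^k\|^2$ term of $\varpi$. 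A separate coordinate-wise descent lemma calculation, via \eqref{lipineq} applied blockwise together with the per-block optimality, supplies the $\tfrac{L_{\max} - \underline\aalpha}{2}\|\bd^k\|^2$ term; the residual $\tfrac{\sigma}{2}\|\bd^k\|^2$ slack appears when one passes from an upper bound on $F(\tx^k) - F^*$ to the needed bound on $F(x^k) - F^*$ using the descent condition itself. Finally, take $\bE_{\xi_{k-1}}$ of the one-step contraction (using the tower rule $\bE_{\xi_{k-1}}\bE_{i_k}=\bE_{\xi_k}$) and iterate over $k$.

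The main obstacle is the suboptimality bound. The first-order system for $\bd^k$ delivers a clean bound on $F(\tx^k) - F^*$, but the claim is about $F(x^k) - F^*$; converting between them requires coordinate-wise descent plus the descent condition, and the constants must collapse exactly into the form $\varpi + \sigma/2$. In particular, the Young weight used to split $\|\bd^k\|\,\|x^k - x^k_*\|$ is constrained: it must simultaneously produce the coefficient $(L_f+c)/2$ on $\|x^k - x^k_*\|^2$ and keep the $\|\bd^k\|^2$ coefficient small enough to fit within $(L_{\max} - \underline\aalpha + \sigma)/2$. Getting this bookkeeping right, rather than settling for a strictly larger $\varpi$, is the delicate step.
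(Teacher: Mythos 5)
Your overall skeleton (one-step contraction in conditional expectation, eliminate the squared step norm against the descent condition, iterate) matches the paper's, and your algebra for the contraction factor is consistent with the stated rate. The fatal problem is your second ingredient: the claim that the error bound \eqref{err-bdd} yields $F(x^k)-F^* \le (\varpi+\tfrac{\sigma}{2})\|\bd^k\|^2$. This inequality is false in general for composite problems. Take $f\equiv 0$ and $\Psi(x)=|x|$ in one dimension, so $X^*=\{0\}$; then $\hg^k=-x^k$ whenever $|x^k|\le 1$, the error bound holds with $\tau=1$, and $\bd^k=-x^k$ as well once $|x^k|\le 1/c$, yet $F(x^k)-F^*=|x^k|=\|\bd^k\|$, which is not bounded by any constant times $\|\bd^k\|^2$ near the solution (start at $x^0=\epsilon$ with $\epsilon$ small and the claimed inequality already fails at $k=0$). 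The same example defeats the repair sketched in your last paragraph: you cannot pass from a bound on $F(\tx^k)-F^*$ to one on $F(x^k)-F^*$ at the cost of an $O(\|\bd^k\|^2)$ term, because $F(x^k)-F(\tx^k)$ is generically of order $\|\bd^k\|$, and the descent condition only supplies the reverse inequality $F(x^{k+1})\le F(x^k)-\tfrac{\sigma}{2}\|d^k\|^2$. What the error bound does control quadratically is not $F(x^k)-F^*$ but the \emph{model} value $\Phi(\bd^k;x^k)=f(x^k)+\nabla f(x^k)^T\bd^k+\tfrac12\|\bd^k\|^2_{\Theta_k}+\Psi(x^k+\bd^k)$: testing the minimization \eqref{bdk} at $d=x^k_*-x^k$ and using $f(x^k)+\nabla f(x^k)^T(x^k_*-x^k)\le f(x^k_*)+\tfrac{L_f}{2}\|x^k_*-x^k\|^2$ together with $\Theta_k\preceq cI$ gives $\Phi(\bd^k;x^k)\le F^*+\tfrac{c+L_f}{2}[\dist(x^k,X^*)]^2$, which \eqref{err-bdd} and \eqref{hdk-bdd} convert into $F^*$ plus the first term of $\varpi$ times $\|\bd^k\|^2$.

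The missing idea that makes this usable is the decomposition of the conditional expectation over the random block: writing $\bE_{i_k}[F(x^{k+1})]=\sum_i p_iF(x^k+\bd^{k,i})$ and applying \eqref{lipineq} blockwise together with the per-block optimality \eqref{1st-order}, one obtains
\[
\bE_{i_k}[F(x^{k+1})]\;\le\;(1-p_{\min})F(x^k)+p_{\min}\Phi(\bd^k;x^k)+\frac{L_{\max}-\underline\theta}{2}\,\bE_{i_k}[\|d^k\|^2\mid\xi_{k-1}].
\]
It is this convex combination --- weight $1-p_{\min}$ on $F(x^k)$ itself and weight $p_{\min}$ on the model value, the latter being bounded by $F^*$ plus a quadratic in $\|\bd^k\|$ --- that produces the $(1-p_{\min})\sigma$ in the numerator of the contraction factor once $\bE[\|d^k\|^2]$ is eliminated against the descent inequality. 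Your route instead requires a quadratic bound on $F(x^k)-F^*$ itself, a strictly stronger and false statement, so the proof as proposed cannot be completed.
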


\begin{proof}
For each $x^k$, let $x^k_*\in X^*$ such that $\|x^k-x^k_*\|=\dist(x^k,X^*)$. Let $\bd^k$ be 
defined in \eqref{bdx}, and 
\[
\Phi(\bd^k;x^k) = f(x^k)+\nabla f(x^k)^T \bd^k + \frac12\|\bd^k\|^2_{\Theta_k} + \Psi(x^k+\bd^k).
\]
 It follows from \eqref{g-lip} that 
\[
f(x+h) \ge f(x) + \nabla f(x)^T h - \frac12 L_f  \|h\|^2\quad\quad \forall x, h \in \Re^N.   
\] 
Using this inequality, \eqref{bdk} and Lemma \ref{approx-lip} (ii), we have that 
 \[
\ba{lcl}
\Phi(\bd^k;x^k) & \le & f(x^k)+\nabla f(x^k)^T (x^k_*-x^k) + \frac12\|x^k_*-x^k\|^2_{\Theta_k} + \Psi(x^k_*) \\ [6pt]
&\le & f(x^k_*) + \frac12 L_f \|x^k_*-x^k\|^2 + \frac12\|x^k_*-x^k\|^2_{\Theta_k} + \Psi(x^k_*) 
\\ [6pt]
&\le&  F(x^k_*) + \frac12 \gamma \|x^k_*-x^k\|^2 = F^* + \frac12 \gamma  [\dist(x^k,X^*)]^2.
\ea
\]
where $\gamma = c+L_f$. Using this relation and \eqref{err-bdd}, one can obtain that
\[
\Phi(\bd^k;x^k) \le F^* + \frac12 \gamma \tau^2 \|\hg^k\|^2.
\] 
It follows from this inequality and \eqref{hdk-bdd}  that
\[
\Phi(\bd^k;x^k) \le F^* + \frac18 \gamma \tau^2 c^2\left[1+\frac{1}{\underline\aalpha} + \sqrt{1-\frac{2}{c} + \frac{1}{\underline\aalpha^2}}\ \right]^2 
\|\bd^k\|^2,
\] 
which along with \eqref{bd-sqr} yields 
\beq \label{Ephi-ineq}
\bE_{\xi_{k-1}}[\Phi(\bd^k;x^k)] \le F^* + \frac{\gamma \tau^2 c^2}{8p_{\min}}\left[1+\frac{1}{\underline\aalpha} + \sqrt{1-\frac{2}{c} + \frac{1}{\underline\aalpha^2}}\ \right]^2 
\bE_{\xi_{k}}[\|d^k\|^2].
\eeq
In addition, by \eqref{lipineq} and the definition of $\bd^{k,i}$, we have
\beq \label{Fxik}
F(x^k+\bd^{k,i}) \le f(x^k) + \nabla f(x^k)^T \bd^{k,i} + \frac{L_i}{2}\|\bd^{k,i}\|^2 + \Psi(x^k+\bd^{k,i}) \quad\quad \forall i.
\eeq
It also follows from \eqref{subprob-soln} that 
\beq \label{1st-order}
\nabla f(x^k)^T \bd^{k,i} + \frac{\theta_{k,i}}{2} \|\bd^{k,i}\|^2 + \Psi(x^k+\bd^{k,i}) -\Psi(x^k) \le 0 \quad\quad \forall i.
\eeq
Using these two inequalities, we can obtain that
\[
\ba{lcl}
\bE_{i_k} [F(x^{k+1})] &=& \bE_{i_k} [F(x^k + \bd^{k,i_k})\bigm| \xi_{k-1}] \ = \ \sum^n_{i=1} p_i F(x^k + \bd^{k,i}) \\ [10pt] 
&\le & \sum^n_{i=1} p_i  [f(x^k) + \nabla f(x^k)^T \bd^{k,i} + \frac{L_i}{2}\|\bd^{k,i}\|^2 + \Psi(x^k+\bd^{k,i})] \\ [10pt] 
&=& F(x^k) + \sum^n_{i=1} p_i  [\nabla f(x^k)^T \bd^{k,i} + \frac{L_i}{2}\|\bd^{k,i}\|^2 + \Psi(x^k+\bd^{k,i})-\Psi(x^k)] \\ [10pt] 
&=& F(x^k) + \sum^n_{i=1} p_i  \underbrace{[\nabla f(x^k)^T \bd^{k,i} + \frac{\theta_{k,i}}{2}\|\bd^{k,i}\|^2 + \Psi(x^k+\bd^{k,i})-\Psi(x^k)]}_{\le 0} \\ [22pt] 
&& + \frac12\sum^n_{i=1}  p_i(L_i-\theta_{k,i})\|\bd^{k,i}\|^2 \\ [10pt] 
&\le& F(x^k) + p_{\min} \sum^n_{i=1} [\nabla f(x^k)^T \bd^{k,i} + \frac{\theta_{k,i}}{2}\|\bd^{k,i}\|^2 + \Psi(x^k+\bd^{k,i})-\Psi(x^k)] \\ [10pt] 
&& + \frac12\sum^n_{i=1}  p_i(L_i-\theta_{k,i})\|\bd^{k,i}\|^2 \\ [10pt] 
&=& F(x^k) + p_{\min} [\nabla f(x^k)^T \bd^k + \frac12\|\bd^k\|^2_{\Theta_k} + \Psi(x^k+\bd^k)-\Psi(x^k)] \\ [10pt] 
&& + \frac12\sum^n_{i=1}  p_i(L_i-\theta_{k,i})\|\bd^{k,i}\|^2 \\ [10pt] 
&\le& (1-p_{\min}) F(x^k) + p_{\min} \Phi(\bd^k;x^k) + \frac{L_{\max} - \underline \theta}{2} \ \bE_{i_k} [\|d^k\|^2\bigm| \xi_{k-1}], 
\ea
\]
where the first inequality follows from \eqref{Fxik} and the second inequality is due to 
\eqref{1st-order}. Taking expectation with respect to $\xi_{k-1}$ on both sides of the above 
inequality gives   
\[
\bE_{\xi_k} [F(x^{k+1})] \le (1-p_{\min}) \bE_{\xi_{k-1}}[F(x^{k})] + p_{\min} \bE_{\xi_{k-1}}[\Phi(\bd^k;x^k)] + \frac{L_{\max} - \underline \theta}{2} \ \bE_{\xi_k} [\|d^k\|^2].
\]
Using this inequality and \eqref{Ephi-ineq}, we obtain that 
\[
\bE_{\xi_k} [F(x^{k+1})] \le (1-p_{\min}) \bE_{\xi_{k-1}}[F(x^k)] + p_{\min} F^* + \varpi 
\bE_{\xi_{k}}[\|d^k\|^2] \quad\quad \forall k \ge 0,
\]
where $\varpi$ is defined above. In addition, it follows from \eqref{reduct1} that 
\[
\bE_{\xi_k}[F(x^{k+1})] \ \le \  \bE_{\xi_{k-1}}[F(x^k)] - \frac{\sigma}{2} \bE_{\xi_{k}}[\|d^k\|^2]\quad\quad \forall k \ge 0.
\]
Combining these two inequalities, we obtain that
\[
\bE_{\xi_k}[F(x^{k+1})] - F^* \le \frac{2\varpi+(1-p_{\min}) \sigma}{2\varpi+\sigma} \left(\bE_{\xi_{k-1}}[F(x^k)]-F^*\right) \quad\quad \forall k \ge 0,
\]
and the conclusion of this theorem immediately follows.
\end{proof}

\gap

\begin{remark}
The error bound condition \eqref{err-bdd} holds for a class of problems, especially when
$f$ is strongly convex. More discussion about this condition can be found, for example, in \cite{HoLu12}. 
\end{remark}

\section{Numerical experiments} \label{comp}

In this section we illustrate the numerical behavior of the RNBPG method on the 
$\ell_1$-regularized least-squares problem and a dual SVM problem in machine learning. 
First we consider the $\ell_1$-regularized least-squares problem:
\[
    F^*= \min_{x\in\Re^N} ~\left\{\frac{1}{2}\|Ax-b\|_2^2 + \lambda\|x\|_1 \right\},
\]
where $A\in \Re^{m\times N}$, $b\in\Re^m$, 
and $\lambda>0$ is a regularization parameter. 
Clearly, this problem is a special case of the general model~\eqref{NLP}  with 
$f(x) = \|Ax-b\|_2^2/2$ and $\Psi(x)=\lambda \|x\|_1$ and thus our 
proposed RNBPG method can be suitably applied to solve it. 

We generated a random instance with $m=1000$ and $N=2000$ following the 
procedure described in \cite[Section~6]{Nesterov13composite}. 
The advantage of this procedure is that an optimal solution $x^*$ 
is generated together with~$A$ and~$b$, and hence 
the optimal value $F^*$ is known. 
We generated an instance where the optimal solution $x^*$ has only
$200$ nonzero entries, so this can be considered as a sparse recovery problem.
We compare RNBPG with the following methods:
\begin{itemize}
    \item RBCD: The RBCD method \cite{RichtarikTakac12} with constant 
        step sizes $1/L_i$ determined by the Lipschitz constants $L_i$. 
        Here, $L_i=\|A_{:,i}\|_2^2$ where $A_{:,i}$ is the $i$th column
        block corresponding to the block partitions of~$x_i$ and $\|\cdot\|_2$
        is the matrix spectral norm.
    \item RBCD-LS: A variant of RBCD method with variable stepsizes that are 
        determined by a block-coordinate-wise backtracking line search scheme. 
        This method can also be regarded as a variant of RNBPG with $M=0$,
        but which has the property of monotone descent. 
\end{itemize}

As discussed in \cite{Nesterov12rcdm}, 
the structure of the least-squares function $f(x)=\|Ax-b\|_2^2/2$ allows
efficient computation of coordinate gradients, 
with cost of $O(mN_i)$ operations for block~$i$ as opposed to $O(mN)$ for 
computing the full gradient.
We note that the same structure also allows efficient computation of the 
function value, which costs the same order of operations as computing
coordinate gradients.
Therefore the backtracking line search used in RBCD-LS 
as well as the nonmonotone line search used in RNBPG 
(both relies on computing function values),
have the same order computational cost as evaluating coordinate gradients
at each iteration.
Therefore we can focus on comparing their required number of iterations to
obtain the same accuracy in reducing the objective value.

\begin{figure}[t]
  \psfrag{k}[cc]{\small Iteration number $k$}
  \psfrag{F}[bc]{\small $F(x^k)-F^*$}
  \psfrag{RBCD}[bl]{\footnotesize RBCD}
  \psfrag{RACD}[bl]{\footnotesize RACD}
  \psfrag{RBCD-LSLS}[bl]{\footnotesize RBCD-LS}
  \psfrag{RBCD-NM}[bl]{\footnotesize RNBPG}
  \begin{center}
    \subfloat[Blocksize $N_i=1$.]{
    \includegraphics[width=0.45\textwidth]{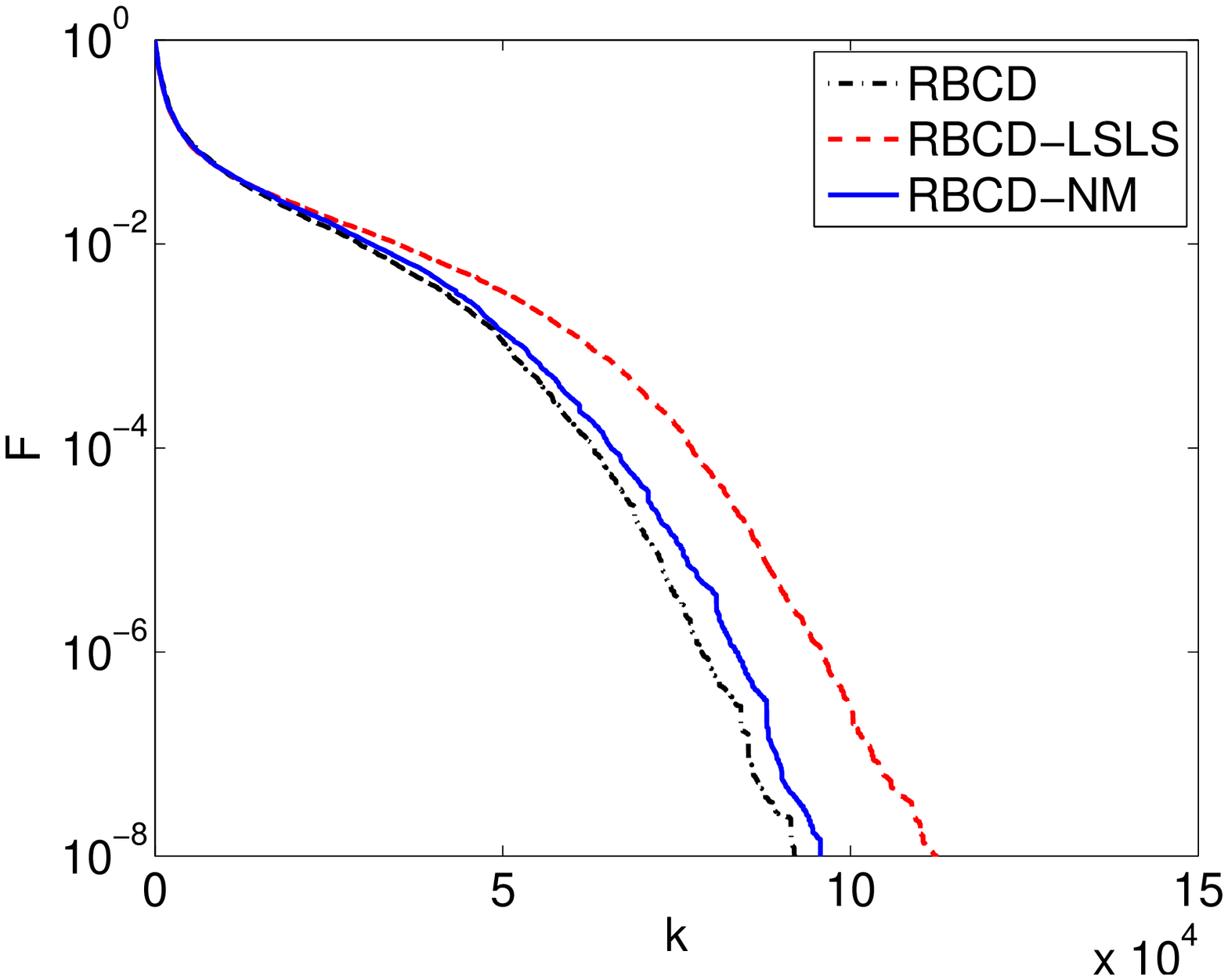}
    \label{fig:lasso-b1}}
    \hfill
    \subfloat[Blocksize $N_i=20$.]{
    \includegraphics[width=0.45\textwidth]{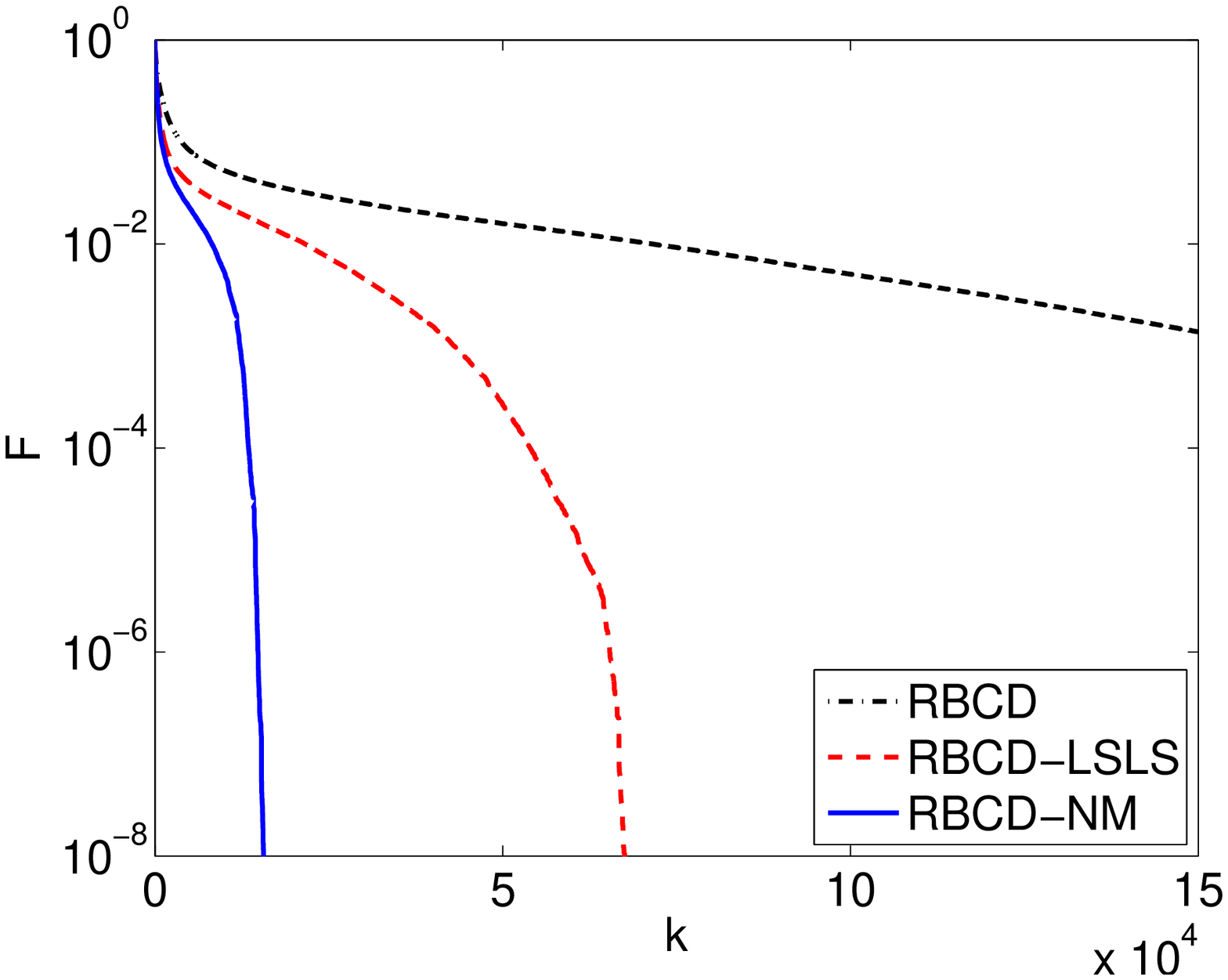} 
    \hspace{1ex}\mbox{}
    \label{fig:lasso-b20}}\\
    \subfloat[Blocksize $N_i=200$.]{
    \includegraphics[width=0.45\textwidth]{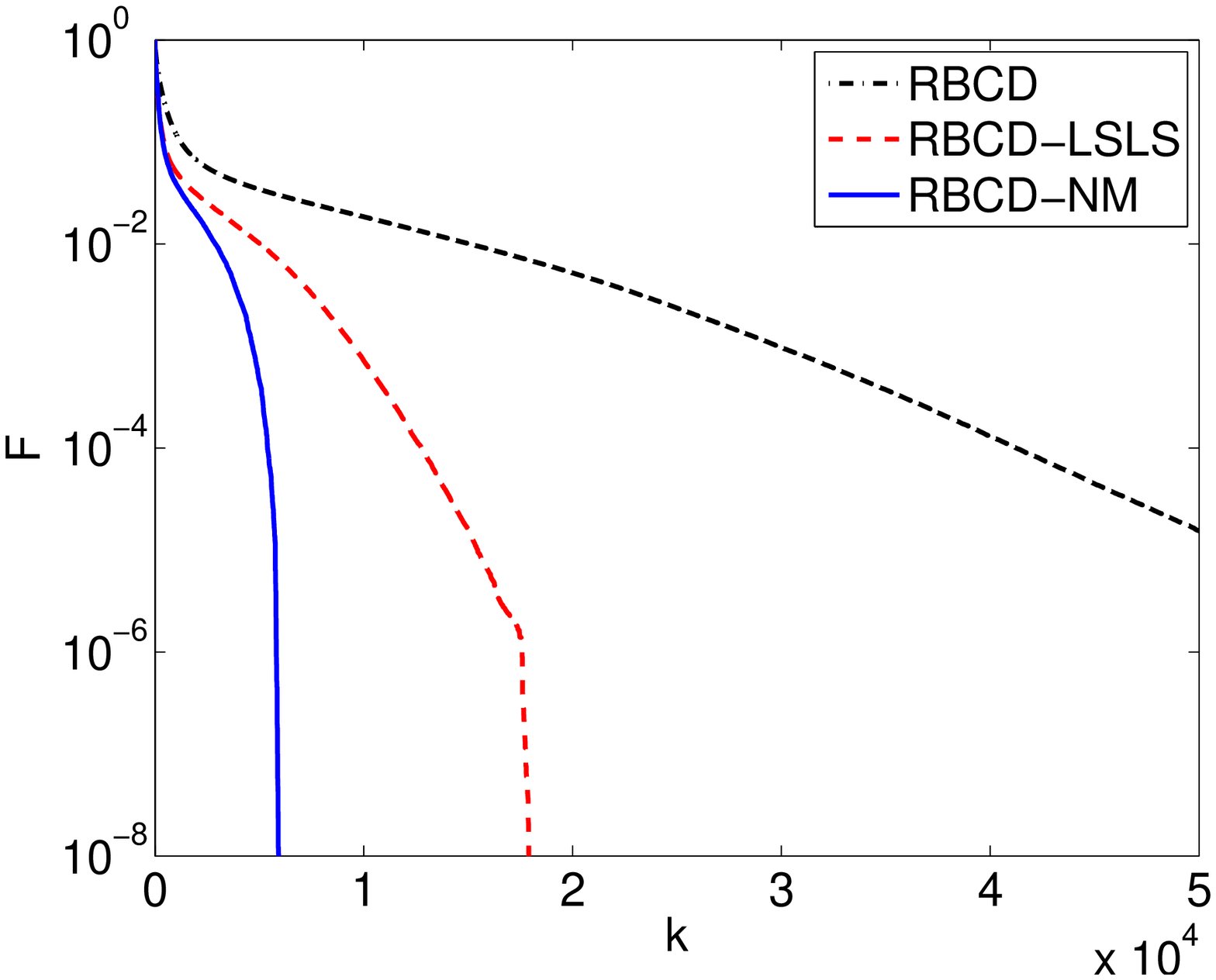}
    \label{fig:lasso-b200}}
    \hfill
    \subfloat[Blocksize $N_i=2000$.]{
    \includegraphics[width=0.47\textwidth]{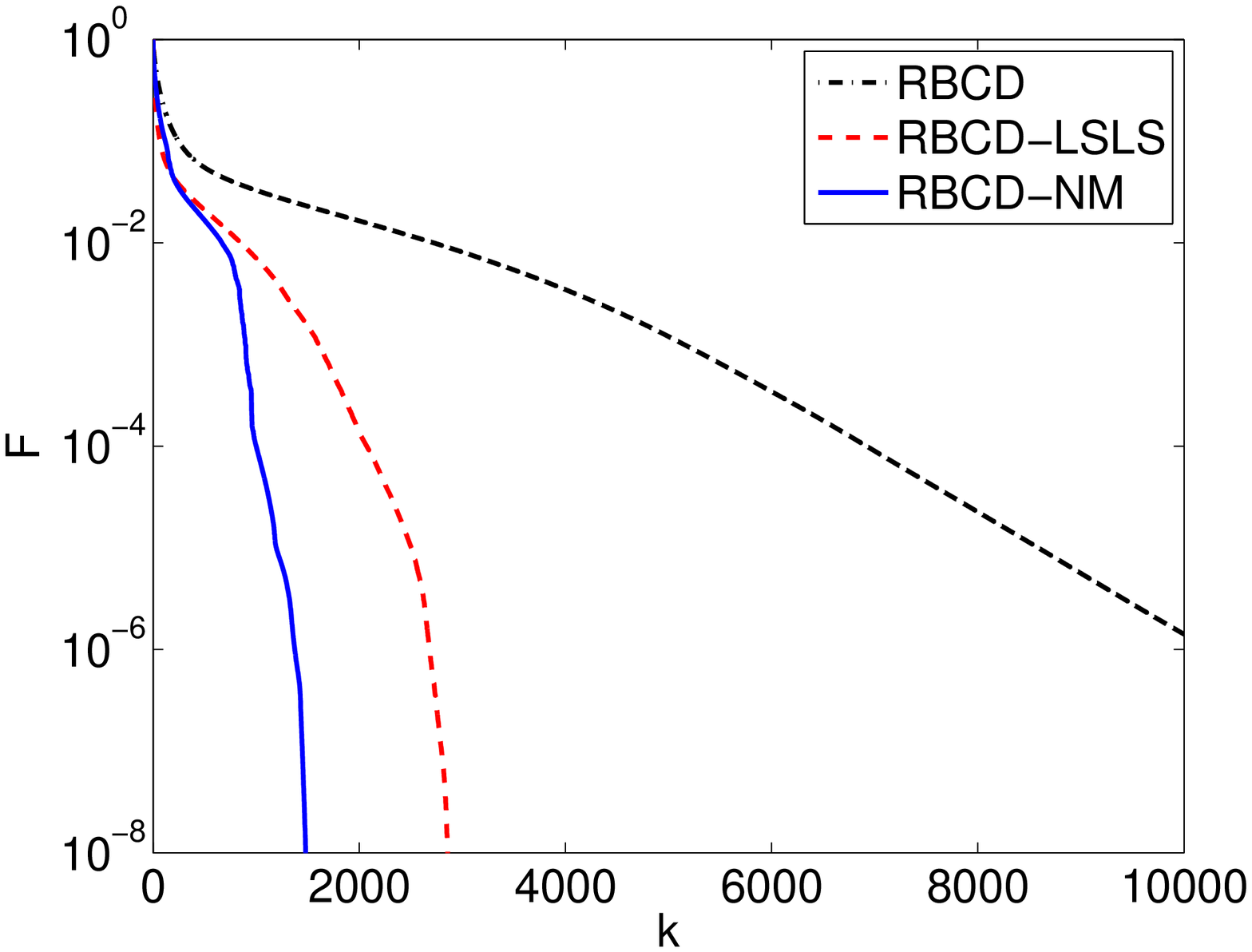}
    \label{fig:lasso-b2000}}
  \end{center}
  \caption{Comparison of different methods with block coordinate sizes
  $N_i=1, 20, 200, 2000$.}
  \label{fig:lasso}
\end{figure}

We run each algorithm with four different block coordinate sizes
$N_i=1, 20, 200, 2000$ for all $i$. 
For each blocksize, we pick the block coordinates uniformly at random
at each iteration.
Note that $N_i=2000=N$ gives the full gradient versions of the methods
considered, which are deterministic algorithms.
We choose the same initial point $x^0 = 0$ for all three methods.

For the RNBPG method, we used the parameters $M=10$, $\eta = 1.1$, $\underline\aalpha = 10^{-8}$, 
$\bar\aalpha = 10^8$ and $\sigma=10^{-4}$. In addition, we used the Barzilai-Borwein 
spectral method \cite{BaBo88} to compute the initial estimate $\theta_k^0$. 
That is, we choose
\[
\theta_k^0 = \frac{\|A_{:,i_k}u\|_2^2}{\|u\|_2^2},
\]
where 
\[
u = \arg\min_s \left\{\nabla_{i_k} f(x^k)^T s + \frac{L_{i_k}}{2}
\|s\|^2 + \q_{i_k}(x^k_{i_k}+s)\right\}, \quad L_{i_k} = \|A_{:,i_k}\|_2^2.
\]

Figure~\ref{fig:lasso} shows the behavior of different algorithms with the four different block 
coordinate sizes. For $N_i=1$ in Figure~\ref{fig:lasso}\subref{fig:lasso-b1}, RBCD has slightly better 
convergence speed than RBCD-LS and RNBPG.  The reason is that in this case, along each block $f$ 
becomes an one-dimensional quadratic function, and the value $L_i=\|A_{:,i}\|_2^2$ 
gives the accurate second partial derivative of $f$ along each dimension. 
Therefore in this case the RBCD method essentially uses the best step size, which is 
generally better than the ones used in RBCD-LS and RNBPG. 

When the blocksize $N_i$ is larger than one, the value $L_i=\|A_{:,i}\|_2^2$ 
is the magnitude of second derivative along the most curved direction.
Line search based methods may take advantage of the possibly much smaller
local curvature along the search direction by taking larger step sizes.
Figure~\ref{fig:lasso}~\subref{fig:lasso-b20}, \subref{fig:lasso-b200}
and~\subref{fig:lasso-b2000} show that RBCD-LS converges much faster that
RBCD while RNBPG (with $M=10$) converges substantially faster than RBCD-LS.


\begin{figure}[t]
  \psfrag{blk}[cc]{\small blocksize $N_i$}
  \psfrag{itr}[bc]{\small \# iterations}
  \psfrag{epo}[bc]{\small \# epochs}
  \psfrag{time}[bc]{\small Time}
  \psfrag{RBCD}[bl]{\footnotesize RBCD}
  \psfrag{RACD}[bl]{\footnotesize RACD}
  \psfrag{RBCD-LSLS}[bl]{\footnotesize RBCD-LS}
  \psfrag{RBCD-NM}[bl]{\footnotesize RNBPG}
  \begin{center}
    \subfloat[Number of iterations versus blocksize.]{
    \includegraphics[width=0.45\textwidth]{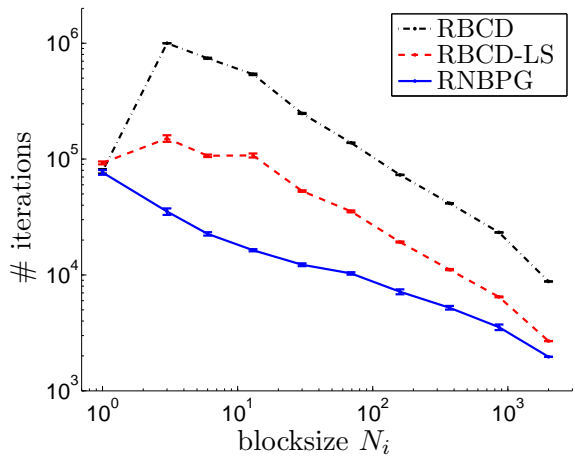}
    \label{fig:lasso-iter}}
    \hfill
    \subfloat[Number of epochs versus blocksize.]{
    \includegraphics[width=0.45\textwidth]{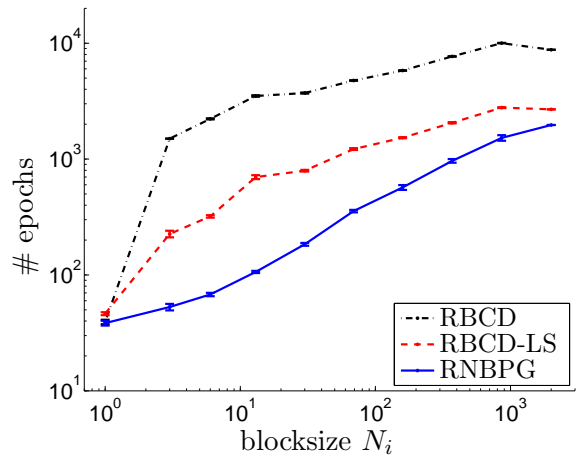} 
    \hspace{1ex}\mbox{}
    \label{fig:lasso-epch}}\\
    \subfloat[Computation time versus blocksize.]{
    \includegraphics[width=0.45\textwidth]{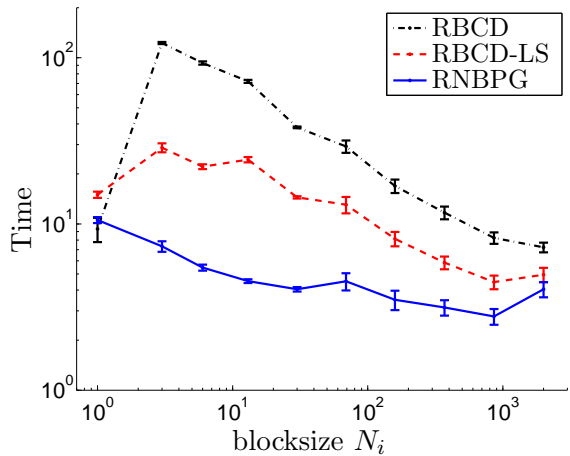}
    \label{fig:lasso-time}}
    \hfill
    \subfloat[Computation time versus blocksize for a different problem instance with $m=5000$ and $N=1000$.]{
    \includegraphics[width=0.47\textwidth]{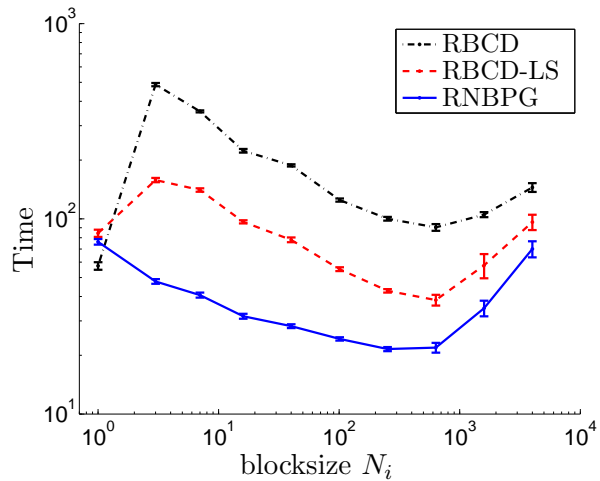}
    \label{fig:lasso-time-a}}
  \end{center}
  \caption{Comparison of different methods when varying the block coordinate size $N_i$.}
  \label{fig:lasso-varyblock}
\end{figure}

Figure~\ref{fig:lasso-varyblock} shows more comprehensive study
of the performance of the three methods: RBCD, RBCD-LS, and RNBPG.
Figure~\ref{fig:lasso-varyblock}\subref{fig:lasso-iter} 
shows the number of iterations of different methods required to reach 
the precision $F(x^k)-F(x^*)\leq 10^{-6}$, when using~10 different block sizes 
ranging from~1 to~2000 with equal logarithmic spacing.
Figure~\ref{fig:lasso-varyblock}\subref{fig:lasso-epch} 
shows the number of epochs required to reach the same precision,
where each epoch corresponds to one equivalent pass over the dataset 
$A\in\Re^{m\times N}$, that is, equivalent to $N/N_i$ iterations.
For each method and each block size, we record the results of 10 runs with
different random sequences to pick the block coordinates, and plot the mean
with the standard deviation as error bars. 
As we can see, the number of iterations in general decreases when we increase 
the block size, because each iteration involves more coordinates and more computation.
On the other hand, the number of epochs required increases with the block size,
meaning that larger block size updates are less efficient than small block size
updates. 

The above observations suggest that using larger block sizes is less efficient
in terms of the overall computation work (e.g., measured in total flops). 
However, this does not mean longer computation time.
In particular, using larger block sizes may better take advantage of 
modern multi-core computers for parallel computing, thus may take less
computation time.
Figure~\ref{fig:lasso-varyblock}\subref{fig:lasso-time} 
shows the computation time required to reach the same precision
on a 12 core Intel Xeon computer. 
We used the Intel Math Kernel Library (MKL) to carry out parallel
dense matrix and vector operations.
The results suggest that using appropriate large block size may take 
the least amount of computation time.
We note that such timing results heavily depend on the specific architecture
of the computer, in particular its cache size for fast access, 
the relative size of the data matrix~$A$, and other implementation details. 
For example, 
Figure~\ref{fig:lasso-varyblock}\subref{fig:lasso-time-a} 
shows the timing results on the same computer for a different problem instance 
with $m=2000$ and $N=4000$. 
Here the best block size is smaller than one shown in 
Figure~\ref{fig:lasso-varyblock}\subref{fig:lasso-time}, because
the size of each column of~$A$ is doubled and 
the operations involved in each coordinate 
(corresponding to a column of the matrix) has increased.
However, for any fixed block size, 
the relative performance of the three algorithms are consistent;
in particular, RNBPG substantially outperforms the other two methods
in most cases.

\begin{figure}[t]
  \psfrag{k}[cc]{\small Iteration number $k$}
  \psfrag{F}[bc]{\small $F(x^k)-F^*$}
  \psfrag{RBCD}[bl]{\footnotesize RBCD}
  \psfrag{RACD}[bl]{\footnotesize RACD}
  \psfrag{RBCD-LSLS}[bl]{\footnotesize RBCD-LS}
  \psfrag{RBCD-NM}[bl]{\footnotesize RNBPG}
  \begin{center}
    \subfloat[RCV1 dataset with blocksize $N_i=100$.]{
    \includegraphics[width=0.45\textwidth]{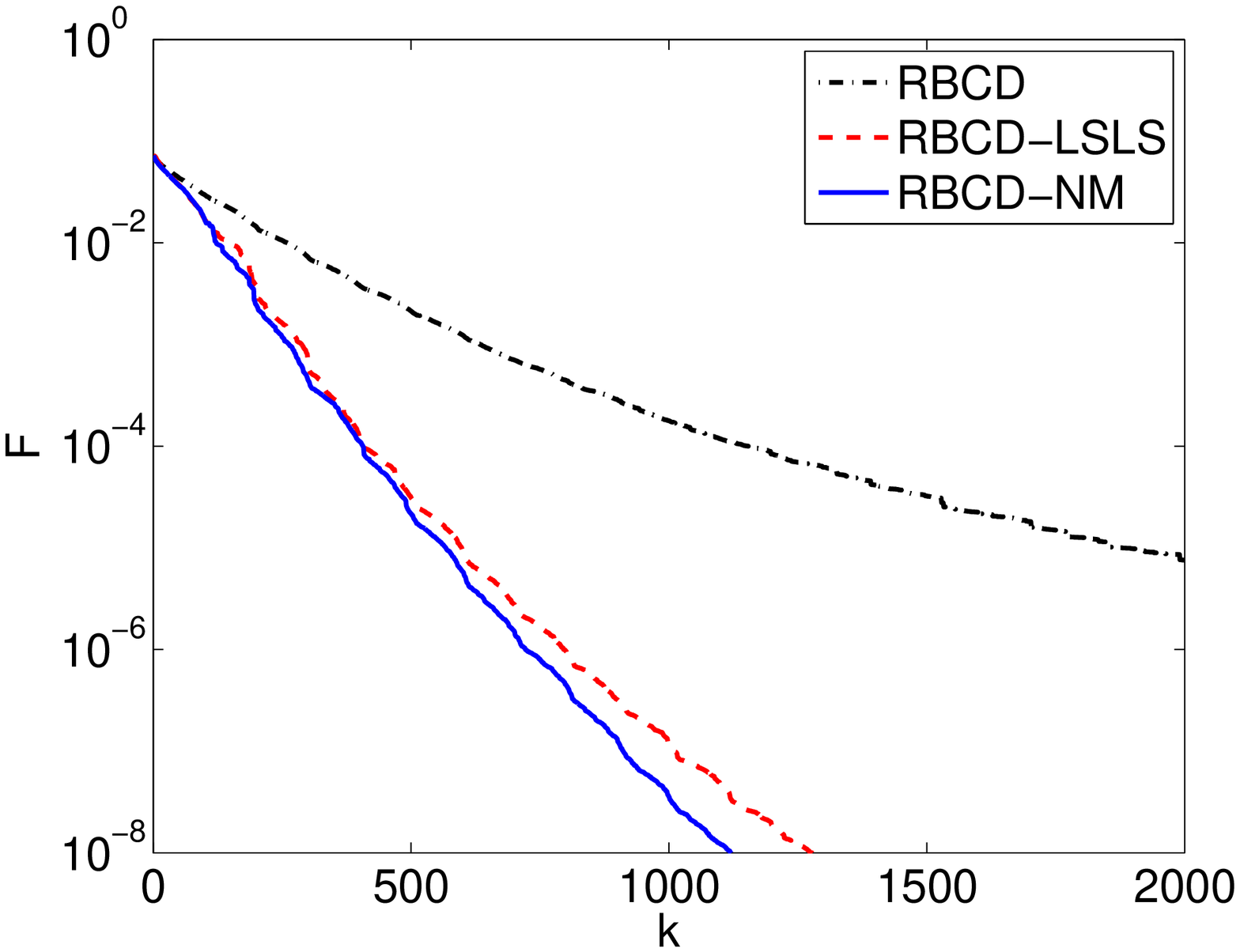}
    \label{fig:rcv1b100}}
    \hfill
    \subfloat[RCV1 dataset with blocksize $N_i=1000$.]{
    \includegraphics[width=0.45\textwidth]{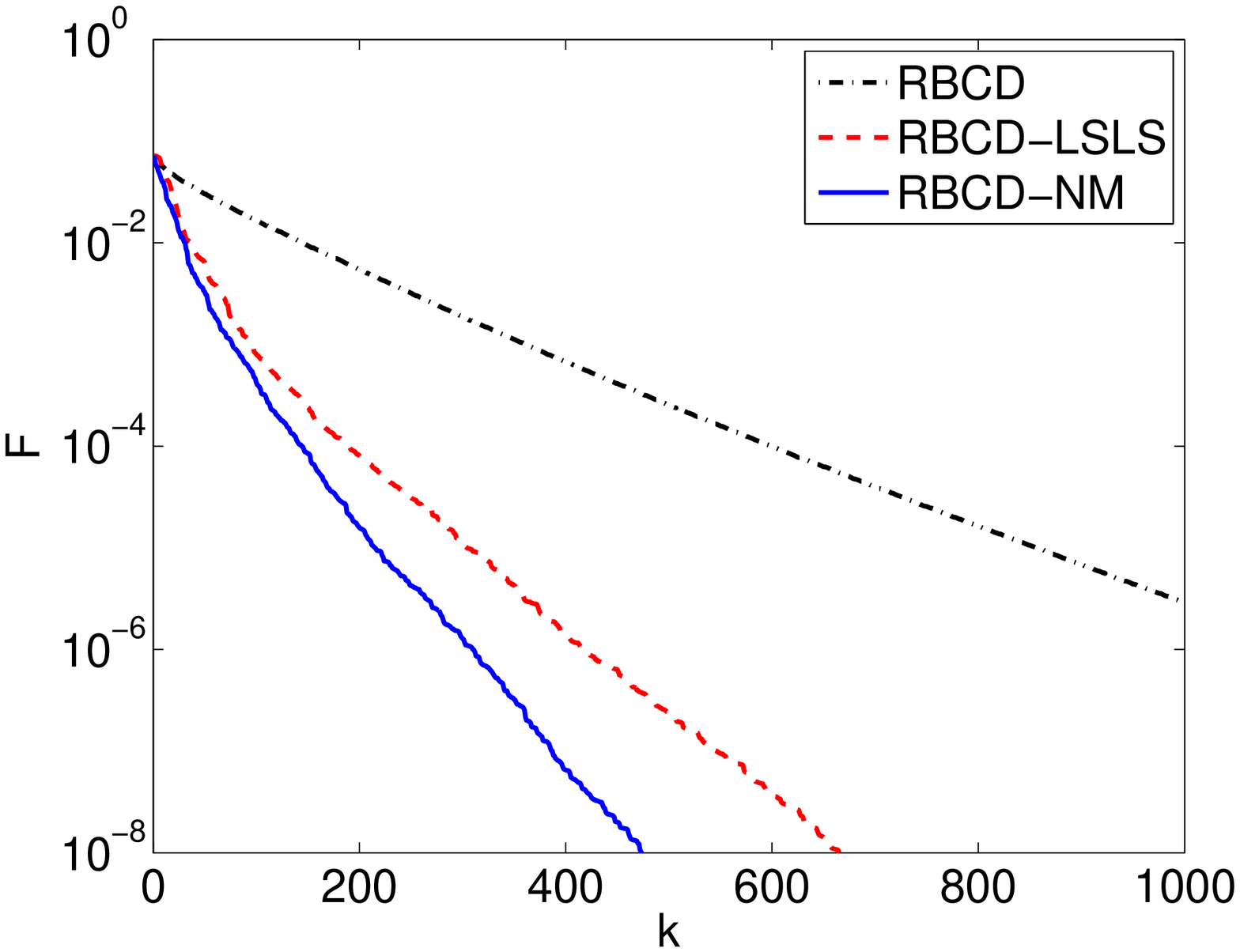} 
    \hspace{1ex}\mbox{}
    \label{fig:rcv1b1000}}\\
    \subfloat[News20 dataset with blocksize $N_i=100$.]{
    \includegraphics[width=0.45\textwidth]{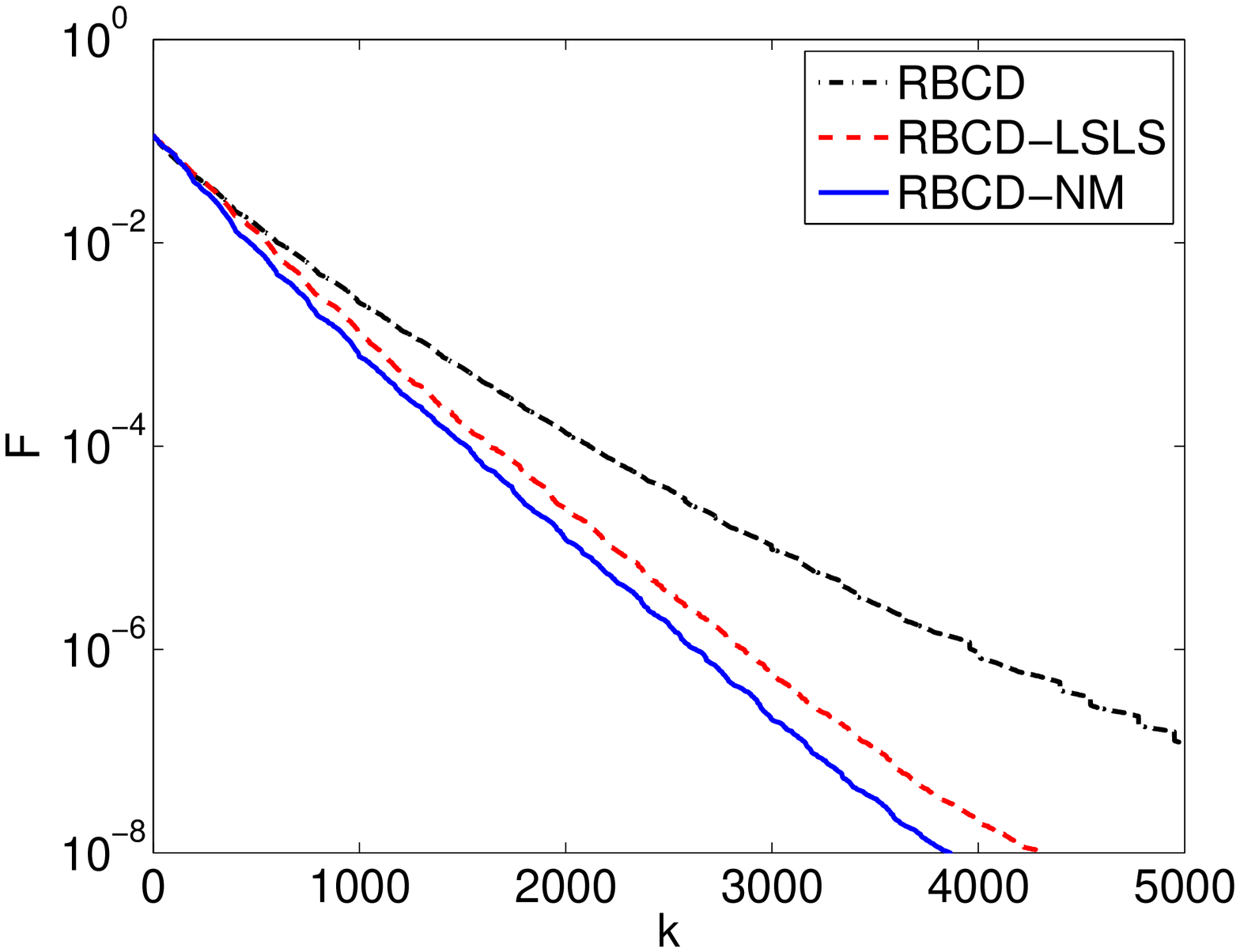}
    \label{fig:news20b100}}
    \hfill
    \subfloat[News20 dataset with blocksize $N_i=1000$.]{
    \includegraphics[width=0.47\textwidth]{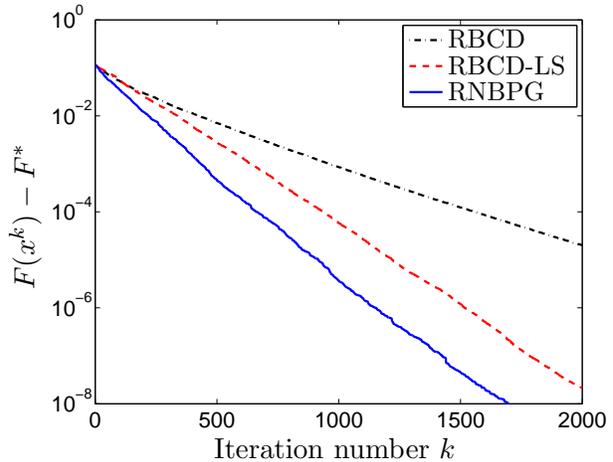}
    \label{fig:news20b1000}}
  \end{center}
  \caption{Comparison on the dual empirical risk minimization problem with real datasets.}
  \label{fig:dualerm}
\end{figure}

We also conducted experiments on using randomized block coordinate methods
to solve a dual SVM problem in machine learning 
(specifically, the dual of a smoothed SVM problem
described in \cite[Section~6.2]{ShZh13}).
We used two real datasets from the LIBSVM web site \cite{LIBSVMdata},
whose characteristics are summarized in Table~\ref{tab:datasets}.
In the dual SVM problem, the dimension of the dual variables are the same
as the number of samples~$N$, and we  partition the dual variables into
blocks to apply the three randomized block coordinate gradient methods.
Figure~\ref{fig:dualerm} shows the reduction of the objective value
with the three methods on the two datasets, each illustrated with
two block sizes: $N_i=100$ and $N_i=1000$.
We observe that the RNBPG method converges faster than the other two methods, 
especially with relatively larger block sizes.

\begin{table}[t]
    \centering
    \begin{tabular}{|l|r|r|r|r|}
        \hline
     & number of samples $N$ & number of features $d$ & sparsity & $\lambda$\\
        \hline
        \texttt{RCV1}    & 20,242 &  47,236 & 0.16\% & 0.0001 \\
        \texttt{News20}  & 19,996 &  1,355,191 & 0.04\%  & 0.0001\\
        \hline
    \end{tabular}
    \caption{Characteristics of two sparse datasets from the LIBSVM web site \cite{LIBSVMdata}.}
    \label{tab:datasets}
\end{table}

To conclude, our experiments on both synthetic and real datasets 
clearly demonstrate the advantage of the nonmonotone 
line search strategy (with spectral initialization) 
for randomized block coordinate gradient methods.


\begin{thebibliography}{10}

\bibitem{BaBo88}
J. Barzilai and J. M. Borwein.
\newblock Two point step size gradient methods.
\newblock {\em IMA Journal of Numerical Analysis}, 8:141--148, 1988.

\bibitem{Bil95}
P. Billingsley.
\newblock Probability and Measure. 
\newblock 3rd edition, John Wiley \& Sons, New York, 1995.

\bibitem{BiMaRa00}
E.~G.~Birgin, J.~M.~Mart\'inez, and M.~Raydan.
\newblock Nonmonotone spectral projected gradient methods on convex sets.
\newblock {\em SIAM Journal on Optimization}, 4:1196--1211, 2000.

\bibitem{ChHsLi08}
K.-W. Chang, C.-J. Hsieh, and C.-J. Lin.
\newblock Coordinate descent method for large-scale $l_2$-loss linear support vector machines.
\newblock {\em Journal of Machine Learning Research}, 9:1369--1398, 2008.

\bibitem{Clarke90}
F.~Clarke.
\newblock {\em Optimization and Nonsmooth Analysis}.
\newblock SIAM, Philadephia, PA, USA, 1990.

\bibitem{DaZh01}
Y. H. Dai and H. C. Zhang.
\newblock Adaptive two-pint stepsize gradient algorithm.
\newblock {\em Numerical Algorithms}, 27:377--385, 2001.

\bibitem{LIBSVMdata}
R.-E. Fan and C.-J. Lin.
\newblock {LIBSVM} data: Classification, regression and multi-label.
\newblock URL: http://www.csie.ntu.edu.tw/\~{}cjlin/libsvmtools/datasets, 2011.

\bibitem{FeLuRo96}
M. C. Ferris, S. Lucidi, and M. Roma. 
\newblock Nonmonotone curvilinear line search methods for unconstrained 
optimization.
\newblock {\em Computational Optimization and Applications}, 6(2): 117--136, 
1996.

\bibitem{GrLu86}
L. Grippo, F. Lampariello, and S. Lucidi.
\newblock A Nonmonotone Line Search Technique for Newton's Method. 
\newblock {\em SIAM Journal on Numerical Analysis}, 23(4): 707--716, 1986.

\bibitem{HoLu12}
M. Hong and Z.-Q. Luo.
\newblock On the linear convergence of the alternating direction methods.
\newblock arXiv:1208.3922, 2012. Submitted.

\bibitem{HongWangLuo13}
M. Hong, X. Wang, M. Razaviyayn and Z.-Q. Luo.
\newblock Iteration complexity analysis of block coordinate descent methods.
\newblock {\em arXiv:1310.6957}, 2013.

\bibitem{HsChLiKeSu08}
C.-J. Hsieh, K.-W. Chang, C.-J. Lin, S. Keerthi, and S. Sundararajan.
\newblock A dual coordinate descent method for large-scale linear SVM.
\newblock In ICML 2008, pages 408--415, 2008.

\bibitem{LeLe10}
D. Leventhal and A. S. Lewis.
\newblock Randomized methods for linear constraints: convergence rates and conditioning.
\newblock {\em Mathematics of Operations Research}, 35(3):641--654, 2010.

\bibitem{LiOs09}
Y. Li and S. Osher.
\newblock Coordinate descent optimization for $l_1$ minimization with application to
compressed sensing; a greedy algorithm.
\newblock {\em Inverse Problems and Imaging}, 3:487--503,  2009.

\bibitem{LuXiao13-1}
Z. Lu and L. Xiao. 
\newblock On the complexity analysis of randomized block coordinate descent methods. 
\newblock To appear in {\em Mathematical Programming}, 2013.

 \bibitem{LuZh12}
  Z. Lu and Y. Zhang.
  \newblock An augmented Lagrangian approach for sparse principal component analysis.
  \newblock {\em Mathematical Programming} 135, pp. 149--193, 2012.

\bibitem{LuTs02}
Z. Q. Luo and P. Tseng.
\newblock On the convergence of the coordinate descent method for convex differentiable minimization.
\newblock {\em Journal of Optimization Theory and Applications}, 72(1):7--35,  2002.

\bibitem{Nesterov12rcdm}
Y.~Nesterov.
\newblock Efficiency of coordinate descent methods on huge-scale optimization problems.
\newblock {\em SIAM Journal on Optimization}, 22(2): 341--362, 2012.

\bibitem{Nesterov13composite}
Y. Nesterov.
\newblock Gradient methods for minimizing composite functions.
\newblock {\em Mathematical Programming}, 140:125--161, 2013.

\bibitem{PaNe13}
A. Patrascu and I. Necoara.
\newblock Efficient random coordinate descent algorithms for large-scale structured 
nonconvex optimization.
\newblock {\em Journal of Global Optimization}, 61(1): 19--46, 2015.

\bibitem{QiScGo10}
Z. Qin, K. Scheinberg, and D. Goldfarb.
\newblock Efficient block-coordinate descent algorithms for the group lasso.
\newblock  {\em Mathematical Programming Computation}, 5(2), 143--169, 2013. 

\bibitem{RiTa11}
P. Richt\'{a}rik and M. Tak\'{a}\v{c}.
\newblock Efficient serial and parallel coordinate descent method for huge-scale truss
topology design.
\newblock {\em Operations Research Proceedings}, 27--32, 2012.

\bibitem{RichtarikTakac12}
P. Richt\'{a}rik and M. Tak\'{a}\v{c}.
\newblock Iteration complexity of randomized block-coordinate descent methods for
minimizing a composite function.
\newblock \newblock {\em Mathematical Programming}, 144(1-2):1--38, 2014.

\bibitem{RichtarikTakac12-1}
P. Richt\'{a}rik and M. Tak\'{a}\v{c}.
\newblock Parallel coordinate descent methods for big data optimization.
\newblock Technical report, November 2012.

\bibitem{ShTe09}
S. Shalev-Shwartz and A. Tewari.
\newblock Stochastic methods for $l_1$ regularized loss minimization.
\newblock In Proceedings of the 26th International Conference on Machine Learning, 2009.

\bibitem{ShZh12}
S. Shalev-Shwartz and T. Zhang.
\newblock Proximal stochastic dual coordinate ascent.
\newblock Technical report, 2012.

\bibitem{ShZh13}
S. Shalev-Shwartz and T. Zhang.
\newblock Stochastic dual coordinate ascent methods for regularized loss minimization.
\newblock {\em Journal of Machine Learning Research}, 14:567--599, 2013.

\bibitem{TaRiGo13}
R. Tappenden, P. Richt\'{a}rik and J. Gondzio.
\newblock Inexact coordinate descent: complexity and preconditioning.
\newblock Technical report, April 2013.

\bibitem{Tseng01}
P. Tseng.
\newblock Convergence of a block coordinate descent method for nondifferentiable minimization.
\newblock {\em  Journal of Optimization Theory and Applications}, 109:475--494, 2001.

\bibitem{TsYu09-1}
P. Tseng and S. Yun.
\newblock Block-coordinate gradient descent method for linearly constrained nonsmooth
separable optimization.
\newblock {\em Journal of Optimization Theory and Applications}, 140:513--535, 2009.

\bibitem{TsYu09-2}
P. Tseng and S. Yun.
\newblock A coordinate gradient descent method for nonsmooth separable minimization.
\newblock {\em Mathematical Programming}, 117:387--423, 2009.

\bibitem{VaFr08}
E.~Van Den Berg and M.~P.~Friedlander.
\newblock Probing the Pareto frontier for basis pursuit solutions.
\newblock {\em SIAM Journal on Scientific Computing}, 31(2):890-912, 2008.

\bibitem{WeGoSc}
Z. Wen, D. Goldfarb, and K. Scheinberg.
\newblock Block coordinate descent methods for semidefinite programming.
\newblock In Miguel F. Anjos and Jean B. Lasserre, editors, Handbook on Semidefinite,
Cone and Polynomial Optimization: Theory, Algorithms, Software and Applications.
Springer, Volume 166: 533--564, 2012.

\bibitem{Wright10}
S. J. Wright.
\newblock Accelerated block-coordinate relaxation for regularized optimization.
\newblock {\em SIAM Journal on Optimization}, 22:159--186, 2012.

\bibitem{WrNoFi09}
S. J. Wright, R. Nowak, and M. Figueiredo. 
\newblock Sparse reconstruction by separable approximation.
 \newblock {\em IEEE Transactions on Image Processing}, 57:2479--2493, 2009.

\bibitem{WuLa08}
T. Wu and K. Lange.
\newblock Coordinate descent algorithms for lasso penalized regression.
\newblock {\em The Annals of Applied Statistics}, 2(1):224--244, 2008.

\bibitem{YuTo11}
S. Yun and K.-C. Toh.
\newblock A coordinate gradient descent method for $l_1$-regularized convex minimization.
\newblock {\em Computational Optimization and Applications}, 48:273--307, 2011.

\bibitem{ZhHa04}
H. Zhang and W. Hager.
\newblock A nonmonotone line search technique and its application to unconstrained 
optimization.
\newblock  {\em SIAM Journal on Optimization} 14(4):1043--1056, 2004.
\end{thebibliography}
\end{document}